\documentclass[11pt]{amsart}
\usepackage[english]{babel}
\usepackage[margin=1.05in]{geometry}
\usepackage{amsmath,amssymb,amsthm,mathtools}
\usepackage{enumitem}
\usepackage{array}
\usepackage{booktabs}
\usepackage{microtype}
\usepackage{xcolor}
\usepackage{graphicx}
\usepackage{tikz}
\usepackage{pgfplots}
\pgfplotsset{compat=1.14}
\usetikzlibrary{arrows.meta,calc,positioning}
\usepackage[colorlinks=true,linkcolor=blue,citecolor=blue,urlcolor=blue]{hyperref}

\newtheorem{theorem}{Theorem}[section]
\newtheorem{lemma}[theorem]{Lemma}
\newtheorem{proposition}[theorem]{Proposition}
\newtheorem{corollary}[theorem]{Corollary}

\newtheorem{definition}[theorem]{Definition}
\newtheorem{remark}[theorem]{Remark}
\newtheorem{example}[theorem]{Example}

\newcommand{\NCF}[1]{[\![#1]\!]}
\newcommand{\D}{\mathcal D}
\newcommand{\W}{\mathcal W}
\newcommand{\sgn}{\operatorname{sgn}}
\newcommand{\coeff}[2]{[q^{#1}]#2}
\newcommand{\qint}[1]{[#1]_q}
\newcommand{\ord}{\operatorname{ord}_q}
\newcommand{\gold}{\mathcal D}
\newcommand{\sil}{\mathcal S}
\newcommand{\Isil}{I_{\mathrm{sil}}}

\newcommand{\C}{\mathbb{C}}
\newcommand{\Q}{\mathbb{Q}}
\newcommand{\R}{\mathbb{R}}
\newcommand{\Z}{\mathbb{Z}}
\newcommand{\PSL}{\mathrm{PSL}}

\tikzset{
  treenode/.style={circle,draw,inner sep=1.35pt,minimum size=5pt},
  smallnode/.style={circle,draw,inner sep=0.9pt,minimum size=4pt},
  dotnode/.style={circle,fill,inner sep=1.35pt,minimum size=4pt},
  lab/.style={font=\scriptsize},
  >=Stealth
}

\pgfplotsset{
  kappagraph/.style={
    width=.94\textwidth,
    height=.50\textwidth,
    xmin=1, xmax=2,
    xtick={1,1.2,1.4,1.6,1.8,2},
    scaled ticks=false,
    xlabel={$x$},
    grid=major,
    major grid style={gray!25},
    axis line style={black},
    tick style={black},
    every axis plot/.append style={very thick,no marks},
    enlargelimits=false
  }
}

\title{\texorpdfstring{Coefficients of $q$-real numbers:\\
their combinatorial meaning and growth}
{Coefficients of $q$-real numbers: their combinatorial meaning and growth}}

\author{Pavel Etingof}
\address{
Pavel Etingof,
Department of Mathematics, 
MIT, Cambridge, MA 02139, USA
}
\email{etingof@math.mit.edu}

\author{Valentin Ovsienko}
\address{
Valentin Ovsienko,
Centre National de la Recherche Scientifique,
Laboratoire de Math\'ematiques de Reims, UMR~9008 CNRS and
Universit\'e de Reims Champagne-Ardenne,
U.F.R. Sciences Exactes et Naturelles,
Moulin de la Housse - BP 1039,
51687 Reims cedex 2,
France}
\email{valentin.ovsienko@univ-reims.fr}

\dedicatory{To Alexander Alexandrovich Kirillov on his 90th birthday}

\date{}

\begin{document}

\begin{abstract}
A $q$-deformed real number, or ``$q$-real'', was defined by Morier-Genoud and the second author.
When $x\in\R$ such that $x\geq0$, the $q$-analogue $[x]_q$ 
is a power series with integer coefficients in one formal variable~$q$.
In general a $q$-real is a formal Laurent series.
The main goal of this paper is to study the coefficients of $q$-reals as functions on~$\R$ and give 
a combinatorial interpretation of these coefficients.
This allows us to prove a conjecture studied by several authors stating
that the $q$-deformed golden ratio has the smallest radius of convergence among
the radii of the $q$-reals associated with positive real numbers.
This is a $q$-analogue of the classical Hurwitz theorem.
Our approach is combinatorial.
We prove that for every real number $x$ in the interval $(1,2)$ the absolute value of
each coefficient of the power series representing the $q$-real $[x]_q$ is dominated
by the absolute value of the corresponding coefficient of the $q$-deformed golden ratio.
The main notion is a certain collection of ordered rooted trees associated with a $q$-real.
We prove that the golden ratio corresponds to a universal class of trees.

\end{abstract}

\maketitle

\thispagestyle{empty}

%%%%%%%%%%%%%%%%
\section{Introduction}
%%%%%%%%%%%%%%%%

The theory of $q$-deformed rational and real numbers was initiated in \cite{MGO-rationals,MGO-reals}.
For a survey, see~\cite{MGOsur}. 
For a rational $x$, the $q$-deformation $\left[x\right]_q$ is a rational function with integer coefficients.
For an arbitrary real number $x$, one obtains a formal Laurent series $\left[x\right]_q$ with integer coefficients; 
for nonnegative real numbers this is a power series.  
When $x\geq1$, the $q$-deformation of~$x$ is defined as a power series in~$q$:
\begin{equation}
\label{TayEq}
\left[x\right]_q=1+\varkappa_1q+\varkappa_2q^2+\varkappa_3q^3+\cdots
\end{equation}
with coefficients $\varkappa_{k}\in\Z$.
The recurrences
\begin{equation}
\label{SLEq}
\left[x+1\right]_q=q\left[x\right]_q+1,
\qquad\qquad
\left[-\frac{1}{x}\right]_q=-\frac{1}{q\left[x\right]_{q}}
\end{equation}
express modular, or $\PSL(2,\Z)$-equivariance.
These recurrences allow one to extend~\eqref{TayEq} to arbitrary values of $x\in\R$.

Every coefficient $\varkappa_N$ of the series~\eqref{TayEq}
can be viewed as a function in~$x$.
The properties of these functions contain the full information about the 
theory of $q$-real numbers.
The goal of this paper is to study these functions and give their combinatorial interpretation.
Let us mention that the idea to study $\varkappa_N$ as a function on $\R$ was suggested to us by Sergei Fomin.

The following polynomials
\begin{equation}
\label{qInt}
[n]_{q}:=1+q+q^{2}+\cdots+q^{n-1}=\textstyle\frac{1-q^n}{1-q}
\end{equation}
 were considered by Euler~\cite{Eul}.
They are commonly considered as a $q$-analogue of (positive) integers,
and used in the context of combinatorics and the theory of $q$-series.
 Consequently, Gauss~\cite{Gau} introduced polynomials based on $[n]_{q}$, called
 $q$-binomials.
 The polynomials $[n]_{q}$ are extensively used in combinatorics and algebra,
 and more recently in analysis and mathematical physics,
 including quantum groups and quantum calculus.
 Note also that the second expression $[n]_q=(1-q^n)/(1-q)$
 extends the notion of $q$-integers to $n<0$ as a Laurent polynomial.

It is clear that $q$-deformed integers~\eqref{qInt} satisfy the first recurrence in~\eqref{SLEq},
and are characterized by this recurrence and one value $[0]_q=0$.
The second recurrence in~\eqref{SLEq} allows one to extend the $q$-deformation from integers to rationals.
For every $x\in\Q\cup\{\infty\}$, where the additional element $\infty$ is represented by the quotient~$\frac{1}{0}$,
the $q$-deformation $\left[x\right]_q$ can be understood as a map
\begin{equation}
\label{QMap}
[\,.\,]:\Q\cup\{\infty\}\to\Z(q)
\end{equation}
with values in the space $\Z(q)$ of rational functions
with integer coefficients.
This map is uniquely determined by~\eqref{SLEq} and the image of one point, namely
$$
\left[0\right]_q:=0.
$$
This statement is implicit in~\cite{MGO-rationals};
it was explicitly proved in~\cite{LMG}.
For details, see the survey \cite[Theorem 5]{MGOsur}.
The series~\eqref{TayEq} for irrational $x$ is determined by the convergence
of Taylor series called the ``stabilization phenomenon'' in~\cite{MGO-reals}.
 
The simplest example of a $q$-irrational is the $q$-deformation of the golden ratio,
$\varphi=\frac{1+\sqrt{5}}{2}$ that starts as follows:
\begin{equation}
\label{Gold}
\begin{array}{rcl}
\left[\varphi\right]_q&=&
1 + q^2 - q^3 + 2 q^4 - 4 q^5 + 8 q^6 - 17 q^7 + 37 q^8 - 82 q^9 + 185 q^{10} \\[6pt]
&&- 423 q^{11} + 978 q^{12}-2283q^{13}+ 5373q^{14}-12735q^{15}+30372q^{16}+ \cdots
\end{array}
\end{equation}
The sequence of coefficients in~$\left[\varphi\right]_q$ coincides (up to the alternating sign and a shift)
with the remarkable OEIS sequence A004148~\cite{OEISA004148}, called the {\it generalized Catalan numbers}; see~\cite{SW}.
It is known to have many combinatorial interpretations, including peakless Motzkin paths or RNA secondary structures.
For more details, see~\cite{Pedon}.

It was shown in~\cite{LMOV} that the radius of convergence of the series $\left[\varphi\right]_q$  is equal to
$$
R_\varphi=\frac{3-\sqrt{5}}{2}\approx0.38.
$$
Based on weaker statements and computer simulations, 
it was conjectured in~\cite{LMOV}  (see also~\cite{MGOV})
that $R_\varphi$ is the smallest possible
radius of convergence for any series~\eqref{TayEq} representing a $q$-real.
This statement was understood in~\cite{LMOV} as a $q$-analogue of Hurwitz's theorem.
Recall that Hurwitz's theorem asserts that every irrational $\alpha$
has infinitely many rational approximants $p/q$ satisfying
$|\alpha-p/q|<1/(\sqrt5\,q^2)$.  The constant $1/\sqrt5$ is sharp, as witnessed
by $\varphi$ (and, equivalently, by its $\PSL(2,\Z)$-orbit); see~\cite{Cassels}.
The current status of this conjecture is the following.

\begin{enumerate}
\item
The conjecture was proved  for rational $x$ in~\cite{EGMS} using the theory of Kleinian groups;
it was also proved for quadratic irrationals in~\cite{Ren,Ren2}.
\item
For an irrational $x$, it was proved in~\cite{Etingof-qreal} that the series
$\left[x\right]_q$ has a radius of convergence greater than or equal to $2-\sqrt{3}\approx0.27$.
\end{enumerate}

Besides the radius of convergence, the (potentially larger) domain of holomorphy of $\left[x\right]_q$
was studied in~\cite{Etingof-qreal}.
For every $x\in\R$, the corresponding $q$-real $\left[x\right]_q$ is a holomorphic function
in $q\in\C$ in a certain domain of $\C$ which contains 
the real interval $(0,1)$.
Moreover, the value of $\left[x\right]_q$ for real $q\in(0,1)$ is
not simply a power series, but a well-defined real number; see~\cite{Etingof-qreal}.
Furthermore, when $q\to1$, this real value of $\left[x\right]_q$ converges to~$x$,
justifying the status of ``$q$-analogue''.

In the present paper, we prove the general conjecture of~\cite{LMOV}.
%Our main result is the following statement.

\begin{theorem}
\label{RadThm}
 For every real $x>0$ the radius of convergence of the series~$\left[x\right]_q$
is greater than or equal to $\frac{3-\sqrt{5}}{2}$.
\end{theorem}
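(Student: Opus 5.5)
The plan is to reduce to $x\in(1,2)$ and then prove coefficientwise domination by $[\varphi]_q$, as the abstract announces.

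\emph{Reduction.} By \eqref{SLEq}, $[x+1]_q=q[x]_q+1$, so $[x]_q$ and $[x+n]_q$ have the same radius of convergence. We may therefore assume $x\in[1,2)$ (treating $x$ integer trivially, since $[n]_q$ is a polynomial), and then $x\in(1,2)$. Writing $x=1+1/y$ with $y>1$ and applying both recurrences in \eqref{SLEq} expresses $[x]_q$ rationally in terms of $[y]_q$. This is how the continued fraction expansion $x=[a_1;a_2,a_3,\dots]$ enters the coefficients.

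\emph{Combinatorial model.} The heart of the argument is a combinatorial formula for $\varkappa_N(x)$, the $N$-th coefficient of \eqref{TayEq} viewed as a function of $x$. Expanding the $q$-continued fraction
\[
[x]_q=q^{\,\cdot}[a_1]_q+\frac{q^{\,\cdot}}{[a_2]_{q^{-1}}+\dots}
\]
as a geometric series at each level gives a signed sum over ordered rooted trees, or equivalently Motzkin/Dyck-type paths. The depth of a tree is controlled by the level of the continued fraction, and the admissible out-degrees at level $i$ are governed by $a_i$. Each tree of weight $N$ contributes $\pm1$, and the sign depends only on $N$; for $x\in(1,2)$ I expect it to be $(-1)^{N+1}$ for $N\ge3$, matching \eqref{Gold}. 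Then
\[
|\varkappa_N(x)|=\#\mathcal T_N(x)
\]
for a set of trees $\mathcal T_N(x)$. For $x=\varphi$ (all $a_i=1$) the trees are the peakless-Motzkin-type objects counted by A004148.

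\emph{Domination.} The main step is to show that $\#\mathcal T_N(x)\le\#\mathcal T_N(\varphi)$ for every $x\in(1,2)$. I would do this by constructing an injection $\mathcal T_N(x)\hookrightarrow\mathcal T_N(\varphi)$, which amounts to showing that the golden-ratio trees form a ``universal'' class. Concretely, a vertex allowed by a partial quotient $a_i\ge2$ would be replaced by a chain of vertices, each allowed by $a_i=1$, in a way that preserves total weight and can be inverted. Equivalently, one could compare generating functions level by level and prove coefficientwise inequalities by induction on the depth of truncation. Truncations are rationals $p/q$, and the finite case is compatible with the stabilization phenomenon, so it suffices to prove domination for rational $x$ uniformly in $N$ and then pass to the limit.

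\emph{Conclusion.} Domination gives $|\varkappa_N(x)|\le|\varkappa_N(\varphi)|$. Hence the radius of convergence of $[x]_q$ is at least $R_\varphi=\frac{3-\sqrt5}{2}$, using the result of \cite{LMOV} for $\varphi$.

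The main obstacle is the injection. Its difficulty is that increasing a partial quotient $a_i$ both adds options at level $i$ and shifts weights (the powers $q^{a_i}$). A naive vertex-by-vertex map will not preserve weight, so I would first try to find the right weight bookkeeping that makes the replacement weight-preserving.
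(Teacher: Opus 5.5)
Your reduction to $x\in(1,2)$ and the closing Cauchy--Hadamard step are fine and agree with the paper. However, the combinatorial core rests on a false premise, and the decisive step is missing.

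You posit that in the expansion of the $q$-continued fraction every tree of weight $N$ carries the same sign $(-1)^{N+1}$. Then $|\varkappa_N(x)|=\#\mathcal T_N(x)$, and domination would reduce to an injection of unsigned sets. This is not so.
\begin{itemize}
\item Already $[\varphi]_q=1+q^2-q^3+2q^4-\cdots$ has sign $(-1)^N$, not $(-1)^{N+1}$.
\item More seriously, the sign of $\varkappa_N$ is not a function of $N$ on $(1,2)$. For example, $\varkappa_3(\sqrt2)=+1$ while $\varkappa_3(\varphi)=-1$. Also $[\sqrt2]_q=1+q^3-2q^5+q^6+4q^7-5q^8-7q^9+\cdots$ follows no alternating pattern.
\item The geometric-series expansions genuinely cancel. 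For the digit $4$, the only two objects of degree $1$, $\tau_1$ and $N_1(\tau_0)$, have opposite signs.
\end{itemize}
So you must work with \emph{signed} families. You then inject the signed family (or its normal forms under a sign-reversing involution) into the positive golden class $\D$, and conclude by the triangle inequality rather than by an equality of cardinalities.

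Your fallback, proving coefficientwise inequalities by induction on truncation depth, breaks for the same reason. In the negative continued fraction, prepending the digit $4$ sends a tail $W$ to $(1+q+q^3W)/(1+q+q^2+q^4W)$. If you know only $|[q^n]W|\le[q^n]D$, the triangle inequality bounds the $q^1$-coefficient by $2>1=[q^1]D$. The true value $0$ comes from the cancellation $\qint{2}/\qint{3}=1-q^2+q^3-q^5+\cdots$, which a bare coefficient bound cannot see. The induction hypothesis must carry structure, not just absolute values.

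The injection you flag as the main obstacle is the entire content of the theorem. ``Replace a vertex by a chain of vertices'' does not supply it. The devices that make it work in the paper are these:
\begin{itemize}
\item Use the negative (Hirzebruch) continued fraction. There each digit acts on $W_x=([x]_q-1)/q^2$ by the M\"obius map $R_a$, with no $q^{-1}$ terms.
\item For a digit $a=m+2\ge3$, move the linear term $q\qint{m}H$ to the left. Then expand $1/(1+q+\cdots+q^m)=\sum_{s\ge0}(q^{s(m+1)}-q^{1+s(m+1)})$; this is exactly where the cancellation is absorbed. Each choice is encoded by a string $u^k$ of constructors $P_1$, whose length modulo $m+1$ records the sign.
\item Group every run of $2$'s with the next digit into a block $2^{(k)}n$ with $n>2$. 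A lone digit $2$ gives $R_2(W)=qW/(1+q^2W)$, whose quadratic term costs only $2$, less than the cost $3$ of the binary golden node $Q$.
\item Use the right-spine decomposition of $\D$ with distinct delimiters: $v=P_2$ for the non-recursive $W$-term and $b(T)=Q(T,\cdot)$ for the recursive $WH$-term. This is what makes the map injective.
\end{itemize}
Stabilization then passes from finite block strings to irrational $x$, as you anticipated.
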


Theorem~\ref{RadThm} belongs to analysis.
However, it will be deduced from a statement that has a combinatorial nature.
We will prove the following ``coefficientwise extremality'', or ``dominance'' property.
For every $N$, the coefficient $\varkappa_N$, viewed as a function in~$x$,
attains its maximum (up to the sign) at $x=\varphi$,
provided $x$ belongs to the interval $(1,2)$, although the maximizer need not be unique.
More precisely, the moduli of the coefficients of the $q$-deformed golden ratio 
are the greatest possible coefficients
among all $q$-reals in the interval $(1,2)$.
Let us use the standard notation $\coeff{N}{[x]_q}$ 
for the $N$th coefficient $\varkappa_N$ of the series~$\left[x\right]_q$.

The main result of this paper is as follows.

\begin{theorem}
\label{DomThm}
For every real number $x$ such that $1< x<2$,
and every $N\geq2$,
\[
  \left|\coeff{N}{[x]_q}\right|
\leq
\left|\coeff{N}{[\varphi]_q}\right|.
\]
\end{theorem}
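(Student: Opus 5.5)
The plan is to pass from analysis to combinatorics through the continued fraction expansion of $x$. For $1<x<2$, write $x=1+\cfrac{1}{a_1+\cfrac{1}{a_2+\cdots}}$ with $a_i\geq1$. Using the recurrences~\eqref{SLEq}, $[x]_q$ can be written as a $q$-deformed continued fraction. Each partial quotient $a_i$ becomes a block whose weights are $q$-integers $\qint{a_i}$ and powers $q^{\pm a_i}$. The first step is to expand this $q$-continued fraction formally as a geometric series at each level. This gives a signed sum over ordered rooted trees, or equivalently lattice paths of Motzkin/Dyck type. The depth of a vertex records which partial quotient governs it, and each tree carries a monomial $\pm q^{\text{(size)}}$. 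The outcome should be a formula
\[
\coeff{N}{[x]_q}=(-1)^{N}\,\#\{\text{trees of weight }N\text{ admissible for }(a_1,a_2,\dots)\}
\]
or a similar expression. The point is that after the normalizations in~\eqref{SLEq} all signs should collapse to a single global sign $(-1)^{N+c}$. For $\varphi$ this is consistent with the alternating signs in~\eqref{Gold}. I would first verify this sign-coherence, since it makes $|\coeff{N}{[x]_q}|$ a genuine cardinality. Truncating $x$ to a rational convergent does not change $\coeff{N}{}$ for large enough truncation (the stabilization phenomenon), so it suffices to treat rational $x$, that is, finite sequences $(a_1,\dots,a_m)$.

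The second step is to describe admissibility explicitly. I expect the constraint at depth $i$ to be that a vertex has at most, or exactly, some number of children determined by $a_i$, with a forced chain of $a_i-1$ "silent" steps of weight $q$. For $\varphi$ all $a_i=1$, and the admissible trees should form a class with no local restrictions other than the weight. This matches the peakless Motzkin path interpretation of A004148.

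The third step, and the main obstacle, is to build for each sequence $(a_i)$ a weight-preserving injection from $x$-admissible trees into $\varphi$-admissible trees, which would make the golden class universal. My first attempt would be a local surgery. A block of partial quotient $a\geq2$ is replaced by $a$ blocks of quotient $1$. Each silent chain of length $a-1$ is then reinterpreted as a chain of unary vertices at new intermediate depths, which is allowed in the golden class because peaks of height one are forbidden but flat steps are free. The resulting depth shifts must be compensated by reindexing all deeper levels. The hard part is to show injectivity: the inserted chains must remain recognizable from the image tree. I would try to encode them by a canonical marker, such as a maximal run of flat steps immediately following an up-step. If weight preservation fails, the fallback is to show the injection only decreases weight and then use monotonicity of the golden counts in $N$, namely $|\coeff{N}{[\varphi]_q}|\leq|\coeff{N+1}{[\varphi]_q}|$ for $N\geq2$, which is visible in~\eqref{Gold}.

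Finally, combining the injection with the cardinality formula gives $|\coeff{N}{[x]_q}|\leq|\coeff{N}{[\varphi]_q}|$ for rational $x\in(1,2)$. Stabilization then extends the inequality to all real $x\in(1,2)$. The restriction $N\geq2$ should come from the first one or two coefficients, which depend on $a_1$ and are handled directly. For example, $\coeff{1}{[\varphi]_q}=0$ while $\coeff{1}{[x]_q}$ need not vanish.
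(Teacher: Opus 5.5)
\textbf{There is a genuine gap: the sign-coherence in your first step is false.} The coefficient $\coeff{N}{[x]_q}$ is not of the form $(-1)^{N+c}\cdot\#\{\text{trees}\}$ in general, and the paper's own data shows this. For $x=\sqrt2=[1;2,2,2,\dots]$,
\[
[\sqrt2]_q=1+q^3-2q^5+q^6+4q^7-5q^8-7q^9+18q^{10}+7q^{11}-55q^{12}+\cdots .
\]
The coefficients of $q^6$ and $q^7$ are both positive, while those of $q^8$ and $q^9$ are both negative, so no global sign $(-1)^{N+c}$ fits, whatever $c$ is. Likewise, for $x_4=\NCF{2,4,3,3,\ldots}$ one has $[x_4]_q=1+q^2-q^4+2q^5-\cdots$, with coefficients $+1$ and $-1$ in the two even degrees $2$ and $4$.

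The tree expansion of the continued fraction is also genuinely signed within a single degree. For the digit $4$, the degree-$1$ coefficient of the tail vanishes as $1-1$, and the degree-$2$ coefficient arises as $-1-1+1$. Your second and third steps are built around a positive ``admissible'' class whose cardinality equals $|\coeff{N}{[x]_q}|$. That premise fails, so the plan fails with it.

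\textbf{The missing idea is that no cardinality is needed.} It suffices to realize the tail $W_x=([x]_q-1)/q^2$ by a signed graded set with a degree-preserving injection into the golden class $\D$, which is counted by $|\coeff{N}{[\varphi]_q}|$. Then $|\coeff{N}{[x]_q}|=|\sum\sgn|\le|\D_{N-2}|$, and cancellation only helps.

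The paper builds such models inductively over negative continued-fraction blocks $2^{(k)}n$ with $n>2$. Via the Hirzebruch formula, each block encodes a pair of consecutive regular partial quotients. The paper uses the negative expansion because its $q$-formula involves no $q^{-1}$. Your level-by-level geometric expansion of the regular formula would first have to cancel negative powers before each tree carries a monomial $\pm q^{\mathrm{size}}$. For each block, the paper divides the recursion by $1+q+\cdots+q^m$ (resp.\ $1+q+\cdots+q^p$). This cuts the unary words down to normal forms whose lengths lie in two residue classes. It then encodes everything as right-spine words in $u,v,b(T)$, where $v$ and $b(\cdot)$ act as delimiters that make the map injective.

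\textbf{Two smaller points.} First, your fallback does not work. A weight-decreasing injection only bounds the count by $\sum_{j\le N}|\D_j|$, unless all weight-$N$ trees land in one fixed weight $N'\le N$. Monotonicity of $|\D_j|$ does not reduce that sum to $|\D_N|$. Second, $\coeff{1}{[x]_q}=0$ for every $x\in(1,2)$. Indeed $[x]_q=1+q-q/[\tilde x]_q$ with $[\tilde x]_q\in 1+q\Z[[q]]$, so the restriction $N\ge2$ is just the shift by $q^2$, not an exception depending on $a_1$.
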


Note that the cases $N=0,1$ are trivial as $\varkappa_0=1$ and $\varkappa_1=0$ for all~$x$.

Note that the interval \((1,2)\) is the natural normalization for this raw comparison: 
outside it one can first remove the explicit translation piece using
$$
[x+r]_q=q^r[x]_q+\qint{r},
$$ 
for an integer \(r\ge0\),
which is the iteration of the first recurrence in~\eqref{SLEq}.
Clearly, this translation does not change the radius of convergence.
Negative translations can be obtained by solving the recurrence in the opposite direction:
$$
[x-1]_q=\frac{[x]_q-1}{q}.
$$
This is relevant in the proof of Theorem~\ref{RadThm} for \(0<x<1\).
Note also that negative translations extend~\eqref{TayEq} to negative~$x$ as Laurent series.

Theorem~\ref{DomThm} immediately implies Theorem~\ref{RadThm} 
with the help of the classical Cauchy--Hadamard theorem.
However Theorem~\ref{DomThm} is substantially stronger as
it identifies the golden series as a sharp universal majorant for every normalized \(q\)-real,
and the proof provides a potentially reusable method. 
Let us outline some immediate consequences 
and possible applications of Theorem~\ref{DomThm}.

\begin{itemize}
\item
Let \(x_j\) be the continued-fraction convergents of \(x\in(1,2)\). Then, for every \(r<R_\varphi\),
the sequence of analytic functions \([x_j]_q\) converges to \( [x]_q\)
uniformly on the closed disk \(|q|\leq r\).
The normalized $q$-reals  \( [x]_q\) with \(x\in(1,2)\) therefore form a normal family on the disk \(|q|<R_\varphi\).

\item
The coefficientwise majorization and local uniform convergence 
supply a universal analytic constraint on the continued-fraction generating functions 
for finite rational models (see~\cite{Ove,MPS,AL})
and suggest a mechanism for passing from their finite rational realizations to irrational limits.

\item
Theorem~\ref{DomThm} also suggests a possible \(q\)-analogue of the Lagrange and Markov spectra. 
Instead of assigning to \(x\) a single Diophantine approximation constant, one could study the coefficient profile
\[
\bigl(
|\varkappa_0(x)|,
|\varkappa_1(x)|,
|\varkappa_2(x)|,
\ldots
\bigr),
\]
or the asymptotic invariant
\[
\Lambda_q(x)
:=
\limsup_{N\to\infty}|\varkappa_N(x)|^{1/N}
=
\frac1{R_x},
\]
where $R_x$  is the radius of convergence of \([x]_q\).
This can be an interesting direction for further study
of extremal properties of continued fractions and Diophantine approximation.
\end{itemize}

\begin{remark} {\rm 
Define the {\it Cantor line} to be the set 
\[
 \R_{\rm C}=(\R\setminus\Q)\sqcup\{r_-,r_+:r\in\Q\}
       \sqcup\{-\infty,+\infty\}
\]
ordered naturally so that $r_-<r_+$ for every
$r\in\Q$.  With the order topology, $\R_{\rm C}$ is compact, perfect and totally
disconnected, hence homeomorphic to the Cantor set.
For a finite point $\xi\in\R_{\rm C}$, define the corresponding formal Laurent series $F_\xi(z)$ by
\[
 F_y(z)=[y]_z\quad (y\notin\Q),\qquad
 F_{r_+}(z)=[r]_z,\qquad
 F_{r_-}(z)=[r]^-_z\quad (r\in\Q),
\]
where the lower rational value is the one defined in~\cite{BBL} (cf. also \cite{Etingof-qreal}, Proposition 4.6).  For
$q_0\in(0,1)$, let $F_\xi(q_0)$ denote the corresponding real $q$-value
considered in~\cite{Etingof-qreal}, which agrees with the displayed series
whenever the latter converges.  By~\cite[Propositions~4.6 and~5.1]{Etingof-qreal}
(see also~\cite{BBL}), the map $\xi\mapsto F_\xi(q_0)$ is continuous and
strictly increasing.

Let $K=[1_+,2_-]$, the order closure of $(1,2)$ in $\R_{\rm C}$.  On $K$ the
series $F_\xi$ are power series; write
$F_\xi(z)=\sum_{N\ge0}a_N(\xi)z^N$.  Stabilization, together with the two
one-sided values at each rational point, implies that every coefficient
function $a_N$ is continuous on $K$.  Theorem~\ref{DomThm}, extended to the
endpoints by continuity, gives
\[
 |a_N(\xi)|\le |\varkappa_N(\varphi)|,
 \qquad \xi\in K,
\]
for every $N$.  Put $\mathbb D(R):=
\{z\in\C:|z|<R\}$, and for an open set $U\subset \Bbb C$, denote by $\mathcal O(U)$ 
the space of holomorphic functions on $U$. Let 
\[
 \mathcal H_\varphi:=\bigl\{f(z)=\sum_{N\ge0}c_Nz^N
 \in\mathcal O(\mathbb D(R_\varphi)): |c_N|\le |\varkappa_N(\varphi)|\ \text{for all }N\bigr\}.
\]
The golden majorant and dominated convergence imply that
\[
 \Phi:K\longrightarrow\mathcal H_\varphi,
 \qquad \Phi(\xi)=F_\xi,
\]
is continuous for the compact-open topology.  It is injective, since evaluation
at any fixed $q_0\in(0,R_\varphi)$ is strictly increasing.  Since $K$ is
compact and $\mathcal H_\varphi$ is Hausdorff, $\Phi$ is a closed topological
embedding.  Combining this statement with integer translations gives analogous
closed embeddings on other bounded order intervals with nonnegative rational
endpoints, with interval-dependent coefficient bounds.
}
\end{remark}

Our proof of Theorem~\ref{DomThm} is based on a classical combinatorial idea
that goes back to Cayley; see~\cite{Cayley} and \cite{FS} for a modern exposition.
We associate a family (``forest'') of weighted rooted trees with every $q$-irrational $[x]_q$.
We then give a ``signed'' combinatorial interpretation of the coefficients of $[x]_q$:
every coefficient $\coeff{N}{[x]_q}$ counts (with coefficients $\{0,\pm1\}$) rooted trees of the corresponding family.
A specialist in enumerative combinatorics will immediately recognize the class $\D$ of trees
associated with the golden ratio as a standard Catalan-like construction. 
The class of trees that we associate to an arbitrary $q$-irrational seems to be a new idea.
The most surprising point is that one fixed positive tree class $\D$ can absorb, 
degree by degree, all the signed recursive classes arising from arbitrary real continued fractions, 
and thereby prove the extremality of the golden ratio.

Let us mention that a combinatorial interpretation of $q$-rationals has been the subject of many recent papers;
see, e.g.~\cite{Ove,MPS,AL}.
For irrationals such interpretations have been investigated in special cases~\cite{OP,HP,Pedon}.
The present paper is a first attempt to give a combinatorial interpretation of the coefficients
of the series~\eqref{TayEq} in general.
Let us also mention a probabilistic interpretation of $q$-reals
suggested in a very recent preprint~\cite{Pro}.
It would be interesting to understand whether there is any relation 
between our combinatorial interpretation and this probabilistic interpretation.

For every irrational \(x>0\), the radius of convergence of the series \([x]_q\) is at most \(1\).
Indeed, an integer-coefficient power series with radius greater than \(1\) is a polynomial.
After integer translation, we can assume that $x\in(1,2)$. 
If $[x]_q\in\Z[q]$, then $[x]_1$ is an integer, whereas the limit
proved in~\cite{Etingof-qreal} gives $[x]_q\to x\notin\Z$ as $q\to1^-$.
Let us mention that $q$-rationals with maximal radius of convergence
which is equal to~$1$ have been studied in~\cite{EVW}.

The structure of this paper is as follows.
In Section~\ref{DefSec}, we recall very briefly the notion of $q$-deformed real numbers.
We introduce an auxiliary ``tail'' series $W_x$ associated with a $q$-real $[x]_q$ when $x\in(1,2)$
such that $[x]_q=1+q^2W_x$.
The tail series for the golden ratio provides a model series that will be compared to any other.
In Section~\ref{TreeSec}, we give an inductive construction of a family of signed rooted trees
associated with every $q$-real.
In Section~\ref{ProofSec}, we prove our main Theorem~\ref{DomThm}.
A particularly simple and aesthetically pleasant example of $\left[\sqrt{2}\right]_q$
(which is the normalized silver ratio) is
elaborated in Section~\ref{SSilver}.
We simplify our general constructions in this case.
In Section~\ref{LastSec}, we describe the coefficient functions
$\varkappa_N$ on the interval $(1,2)$, unveiling a surprising relation to the
Fibonacci sequence.  Finally, in the Appendix we present empirical observations
for a finite sample of rational values of~$x$.

%%%%%%%%%%%%%%%%%%%
%%%%%%%%%%%%%%%%%%%
\section{Explicit formulas for \texorpdfstring{$q$}{q}-deformed real numbers}\label{DefSec}
%%%%%%%%%%%%%%%%%%%
%%%%%%%%%%%%%%%%%%%

In this section, we briefly recall the notion of $q$-real numbers and give an explicit formula to calculate them.
The details and proofs are omitted and can be found in~\cite{MGO-reals,MGOsur}.

%%%%%%%%%%%%%%%%%%%
\subsection{\texorpdfstring{$q$}{q}-rationals}
%%%%%%%%%%%%%%%%%%%

Consider first the case of rationals $x\in\Q$.
Every rational has continued fraction expansion 
with minus signs
$$
x
 \quad =\quad
a_1 - \cfrac{1}{a_2
          - \cfrac{1}{\ddots - \cfrac{1}{a_k} } } ,
$$
where $a_i$ are integers such that $a_i\geq2$ (except for $a_1$ which can be an arbitrary integer).
This expansion is often called the negative, or Hirzebruch continued fraction, and is usually denoted by
$x=\NCF{a_1,a_2,\ldots,a_k}$.

An explicit formula for the $q$-deformations $[x]_q$ then reads
\begin{equation}
\label{qc}
[x]_{q} \quad =\quad
[a_1]_{q} - \cfrac{q^{a_{1}-1}}{[a_2]_{q} 
          - \cfrac{q^{a_{2}-1}}{\ddots \cfrac{\ddots}{[a_{k-1}]_{q}- \cfrac{q^{a_{k-1}-1}}{[a_k]_{q}} } }} ,
\end{equation}
where $[a]_q$ is the $q$-integer~\eqref{qInt}.
Note that the expression~\eqref{qc} is valid for $a_1<0$ in which case the
$q$-analogue $[a_1]_q$ is defined by $[a]_q=(1-q^a)/(1-q)$.
This is an alternative simple way to extend the notion of $q$-rationals for $x<0$,
consistent with the recurrence~\eqref{SLEq}.

\begin{example}
{\rm
Consider the remarkable sequence of rationals $\frac{F_{n+1}}{F_n}$
given by the quotients of consecutive Fibonacci numbers.
The corresponding negative continued fraction expansion is
$$
\frac{F_{2m+1}}{F_{2m}}=\NCF{2,\underbrace{3,\ldots,3}_{m-1}},
\qquad\qquad
\frac{F_{2m+2}}{F_{2m+1}}=
\NCF{2,\underbrace{3,\ldots,3}_{m-1},2},
$$
 for \(m\geq1\).
Applying~\eqref{qc}, one obtains a sequence of rational functions
$$
\left[\frac{5}{3}\right]_{q}=
\frac{1+q+2q^{2}+q^{3}}{1+q+q^{2}},
\qquad\qquad
\left[\frac{8}{5}\right]_q=
\frac{1+2q+2q^2+2q^3+q^4}{1+2q+q^2+q^3},
\ldots
$$
}
\end{example}

%%%%%%%%%%%%%%%%%%%
\subsection{\texorpdfstring{$q$}{q}-irrationals}
%%%%%%%%%%%%%%%%%%%

An irrational $x$ can be represented by an infinite negative continued fraction
$$
x
 \quad =\quad
a_1 - \cfrac{1}{a_2
          - \cfrac{1}{a_3 - \ddots } }
$$
and its $q$-deformation is again of the form
\begin{equation}
\label{qcInf}
[x]_{q} \quad =\quad
[a_1]_q - \cfrac{q^{a_{1}-1}}{[a_2]_q
          - \cfrac{q^{a_{2}-1}}{[a_3]_q - \ddots } }.
\end{equation}
The stabilization theorem of~\cite{MGO-reals} shows that
the coefficients of the finite truncations stabilize, and hence that~\eqref{qcInf}
defines an element of $\Z((q))$; for $x\ge0$ it belongs to $\Z[[q]]$.

\begin{example}
\label{PiExample}
{\rm
(a)
The negative continued fraction expansion of the golden ratio is
$\varphi=\NCF{2,3,3,\ldots}$.
Applying~\eqref{qcInf}, one obtains the series~\eqref{Gold}.
An alternative way to calculate~\eqref{Gold} is to consider the sequence of Taylor series of
the rational functions $\left[\frac{F_{n+1}}{F_n}\right]_q$ that stabilizes to~\eqref{Gold}.

(b)
A much more complicated example is the $q$-deformed $\pi$, also considered in~\cite{MGO-reals}.
The negative continued fraction of $\pi$ starts
$$
\pi=\NCF{4,2,2,2,2,2,2,17,294,3,4,5,16,2,3,4,2,4,2,3,\ldots}
$$
One then obtains
$$
\begin{array}{rcl}
\left[\pi\right]_q&=&
1+q+q^2+q^{10}-q^{12}-q^{13}+q^{15}+q^{16} - q^{20}-2q^{21}- q^{22}+2q^{23}+4q^{24}+q^{25}\\[4pt]
&&-4q^{27}-4q^{28}-2q^{29}+q^{30}+5q^{31}+8q^{32}+3q^{33}-3q^{34}-10q^{35}-12q^{36}-5q^{37}\\[4pt]
&&+8q^{38}+19q^{39}+20q^{40}+2q^{41}-18q^{42}-32q^{43}-25q^{44}+31q^{46}+51q^{47}+45q^{48}\\[4pt]
&&-7q^{49}-65q^{50}- 94q^{51}- 57q^{52}+ 35q^{53}+122q^{54}+ 140q^{55} + 72q^{56}- 76q^{57}+\cdots
\end{array}
$$
Coefficients of this series grow much slower than those of $\left[\varphi\right]_q$, and no pattern
for the coefficients of~$\left[\pi\right]_q$ is known.
}
\end{example}

%%%%%%%%%%%%%%%%%%%
\subsection{Comparison with regular continued fractions}\label{CompSec}
%%%%%%%%%%%%%%%%%%%

There exists a similar explicit formula using more standard, regular continued fraction expansion of $x$.
Recall that the regular continued fraction is
$$
x         \quad =\quad
b_1 + \cfrac{1}{b_2 
          + \cfrac{1}{b_3 +\cfrac{1}{\cdots} } } .
$$
The standard notation is:
$x=[b_1,b_2,b_3,\ldots]$.
Once again, the expansion is finite if and only if $x$ is rational.
The formula for $[x]_q$ in terms of the regular continued fractions is
more complicated and involves inversion of the parameter~$q$; 
see, e.g.,~\cite{MGO-rationals} and~\cite{MGOsur}.

The coefficients in the two expansions for the same real number $x$:
$$
x=\NCF{a_1,a_2,a_3,\ldots}=[b_1,b_2,b_3,\ldots]
$$ 
are connected by the Hirzebruch formula:
\begin{equation}
\label{HZRegEqt}
(a_1,a_2,a_3,a_4,\ldots)=
\big(b_1+1,\underbrace{2,\ldots,2}_{b_2-1},\,
b_3+2,\underbrace{2,\ldots,2}_{b_4-1},\ldots,
b_{2m-1}+2,\underbrace{2,\ldots,2}_{b_{2m}-1}, \ldots\big).
\end{equation}
This expression can be found in~\cite[Eq. (19), p.241]{Hir}.
For a detailed proof, see~\cite{MGO}.

Observe that if $1<{}x<2$, then $a_1=2$ and therefore the digits $a_i$ in~\eqref{HZRegEqt}
consist of blocks of the form $2^{(k)}n$ defined as a string of \(k\) consecutive digits \(2\) 
followed by the digit \(n>2\).
This will be important for inductive proof of Theorem~\ref{DomThm}.

%%%%%%%%%%%%%%%%%%%
\subsection{The ``tail'' series \texorpdfstring{$W_x$}{W x}}
%%%%%%%%%%%%%%%%%%%

Consider a real $x\in(1,2)$;
the corresponding negative continued fraction is of the form
$$
x=\NCF{2,a_1,a_2,a_3,\ldots},
$$
where $a_i\geq2$.
For a rational number, we choose the standard infinite expansion which is
eventually equal to~$2$; this is obtained from a finite expansion by increasing
its last digit by~$1$ and appending $2,2,2,\ldots$.

It follows immediately from~\eqref{qcInf} that
\begin{equation}
\label{WxDef}
 [x]_q=1+q^2W_x,
\end{equation}
or equivalently 
\begin{equation}
\label{X2}
W_x:=\frac{ [x]_q-1}{q^2}.
\end{equation}
If $x\in(1,2)$ is rational, \eqref{X2} is still well-defined.
Thus, we will not distinguish between rational and irrational $x$ when working with $W_x$.

It will also be useful to have another equivalent expression for the series~$W_x$ in terms of the
``tail'' of $x$ defined by the negative continued fraction
$$
\tilde{x}=\NCF{a_1,a_2,a_3,\ldots},
$$

\begin{lemma}
\label{TaiL}
One has
\begin{equation}
\label{DefW}
W_x=\frac{[\tilde{x}]_q-1}{q[\tilde{x}]_q}.
\end{equation}
\end{lemma}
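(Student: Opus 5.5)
The plan is to read off the identity directly from the continued fraction~\eqref{qcInf}, using the second recurrence in~\eqref{SLEq} to handle the inversion. Since $x=\NCF{2,a_1,a_2,\ldots}$, we have $x = 2-\frac{1}{\tilde x}$. Applying~\eqref{qcInf} with first digit $2$, and noting that the remaining part of the $q$-continued fraction is exactly the $q$-continued fraction of $\tilde x$, we get
\[
[x]_q=[2]_q-\frac{q^{2-1}}{[\tilde x]_q}=1+q-\frac{q}{[\tilde x]_q}.
\]
For rational $x$ one should check that the standard infinite expansion eventually equal to $2$ gives the same element of $\Z((q))$ as the finite one. This can be verified from the identity $[\NCF{a_1,\ldots,a_k+1,2,2,\ldots}]_q=[\NCF{a_1,\ldots,a_k}]_q$, which holds because $\NCF{2,2,2,\ldots}=1$ and $[1]_q=1$. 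Alternatively, one can bypass the continued fraction entirely and use~\eqref{SLEq}: first $[x]_q=q[x-1]_q+1$, then $x-1=1-\frac1{\tilde x}$, so $[x-1]_q=q[-1/\tilde x]_q+1=1-\frac{1}{[\tilde x]_q}$.

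Either route yields $[x]_q-1=q-\frac{q}{[\tilde x]_q}=\frac{q([\tilde x]_q-1)}{[\tilde x]_q}$. Dividing by $q^2$ as in~\eqref{X2} then gives~\eqref{DefW}.

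There is no real obstacle here. The only point needing care is that the division makes sense in $\Z((q))$ (indeed in $\Z[[q]]$). Since $\tilde x\ge 2$, the series $[\tilde x]_q$ has constant term $1$ and is therefore invertible. Moreover, $[\tilde x]_q-1=q[\tilde x-1]_q$ is divisible by $q$, so the right-hand side of~\eqref{DefW} is a genuine power series, consistent with the expansion $[x]_q=1+q^2W_x$.
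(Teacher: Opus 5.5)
Your argument is correct and is the paper's own proof: read off $[x]_q=1+q-q/[\tilde x]_q$ from~\eqref{qcInf} with leading digit $2$ and divide by $q^2$. Your extra remarks (the rational case, the alternative derivation via~\eqref{SLEq}, and invertibility of $[\tilde x]_q$) are sound, apart from one small slip. The claim $\tilde x\ge 2$ is false in general: for example, $x=\NCF{2,2,3,3,\ldots}$ gives $\tilde x=\varphi$. You only need $\tilde x>1$, which follows from $1/\tilde x=2-x\in(0,1)$, and this still gives $[\tilde x]_q=1+q[\tilde x-1]_q$ with constant term $1$.
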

\begin{proof}
The expression~\eqref{qcInf} reads
$$
[x]_q=
1+q-\frac{q}{[\tilde{x}]_q},
$$
which readily implies~\eqref{DefW}.
\end{proof}

Conversely, one has
\begin{equation}
\label{Conv}
[\tilde{x}]_q=\frac{1}{1-qW_x}.
\end{equation}

%%%%%%%%%%%%%%%%%%%
\subsection{The tail of \texorpdfstring{$\varphi$}{varphi} and the series \texorpdfstring{$D$}{D}}
%%%%%%%%%%%%%%%%%%%

Consider the simple case of the golden ratio $\varphi$.
The series $W_\varphi=\frac{\left[\varphi\right]_q-1}{q^2}$ has coefficients alternating in sign,
and it will be convenient to get rid of this alternating sign.
To this end, we replace $q$ by $-q$ and will work with the series
\begin{equation}
\label{SerD}
D(q):=\frac{\left[\varphi\right]_{-q}-1}{q^2}.
\end{equation}

The series $D$ has a positive coefficient sequence: 
$$
\begin{array}{rcl}
D(q)&=&
1 + q + 2 q^2 + 4 q^3 + 8 q^4 +17 q^5 + 37 q^6 + 82 q^7 + 185 q^{8} \\[6pt]
&&+ 423 q^{9} + 978 q^{10}+2283q^{11}+ 5373q^{12}+12735q^{13}+30372q^{14}+ \cdots
\end{array}
$$
This is a shift of OEIS sequence A004148~\cite{OEISA004148}; more precisely,
$[q^n]D(q)=\mathrm{A004148}(n+1)$.
 Its coefficients count various combinatorial objects; see~\cite{Pedon}.  
The series \(D\) will be our universal comparison series.

Let us mention that the series \eqref{SerD}
can be written (see~\cite{OP}) as the following remarkable continued fractions:
$$
D(q)\quad=\quad
\cfrac{1}{1-
           \cfrac{q}{1-
          \cfrac{q}{1-
          \cfrac{q^3}{ 1-
          \cfrac{q}{ 1-
          \cfrac{q}{1-
          \cfrac{q^3}{\ddots}}}}}}}
\quad=\quad
\cfrac{1}{1-q
          - \cfrac{q^2}{1-q
          -\cfrac{q^{3}}{1-q
          - \cfrac{q^2}{ 1-q
          - \cfrac{q^3}{ \ddots}}}}} 
$$
that bear a striking resemblance with the classical continued fractions
for the generating functions of the Catalan and Motzkin numbers, respectively;
see~\cite{OP} and references therein.

%%%%%%%%%%%%%%%%%%%
%%%%%%%%%%%%%%%%%%%
\section{Families of rooted trees for a \texorpdfstring{$q$}{q}-real}\label{TreeSec}
%%%%%%%%%%%%%%%%%%%
%%%%%%%%%%%%%%%%%%%

In this section, we introduce our main tool, a family of rooted trees
associated with a $q$-deformed irrational number.
Every tree appears with a $\pm$ sign.
The main idea is to define the series $[x]_q$ recursively
with the help of the negative continued fraction expansion.
Recursive specifications of weighted ordered trees and their translation into generating-function equations are classical; 
see, for example,~\cite{FS}. 
Ordered trees are also closely related to two-coloured Motzkin paths through the bijection of~\cite{DS}. 
The novelty of the construction below lies in the signed tree families attached to continued-fraction digits and, 
more importantly, in their degree-preserving embeddings into a single universal positive class associated 
with the golden ratio.

%%%%%%%%%%%%%%%%%%%
\subsection{The ``golden'' class of rooted trees}
%%%%%%%%%%%%%%%%%%%

Let us first construct a class of rooted trees $\D$ which will be our universal comparison class.
This class is naturally associated with the series~\eqref{SerD}.

\begin{proposition}
\label{DProp}
The series $D(q)$ satisfies the following recurrence
\begin{equation}
\label{eq:D}
D=1+qD+q^2D+q^3D^2,
\end{equation}
and is completely determined by it.
\end{proposition}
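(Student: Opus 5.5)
We derive the quadratic equation for $D$ directly from the self-similarity of the continued fraction $\varphi=\NCF{2,3,3,3,\ldots}$, and then check uniqueness by a coefficient recursion.

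\textbf{Step 1: the functional equation for $[\varphi]_q$.} The tail of $\varphi$ is $\tilde\varphi=\NCF{3,3,3,\ldots}=\varphi+1$. By the first relation in~\eqref{SLEq},
\[
[\tilde\varphi]_q=q[\varphi]_q+1 .
\]
Set $W=W_\varphi$, so that $[\varphi]_q=1+q^2W$ by~\eqref{WxDef}. Then $[\tilde\varphi]_q=1+q+q^3W$.

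\textbf{Step 2: an equation for $W$.} Combine Step 1 with~\eqref{Conv}, namely $[\tilde\varphi]_q=1/(1-qW)$. Clearing denominators gives
\[
(1+q+q^3W)(1-qW)=1 .
\]
Expanding, we get $q-qW-q^2W+q^3W-q^4W^2=0$. Dividing by $q$ yields
\[
W=1-qW+q^2W-q^3W^2 .
\]

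\textbf{Step 3: pass to $D$.} From~\eqref{SerD}, $D(q)=W(-q)$. Replacing $q$ by $-q$ in the equation for $W$ gives
\[
D=1+qD+q^2D+q^3D^2,
\]
which is~\eqref{eq:D}. Here the substitution $q\mapsto -q$ is legitimate because all objects are formal power series. The only issue that needs care is the sign bookkeeping, and it is straightforward.

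\textbf{Step 4: uniqueness.} Write $D=\sum_n d_nq^n$ and compare coefficients of $q^n$ in~\eqref{eq:D}. This gives $d_0=1$ and
\[
d_n=d_{n-1}+d_{n-2}+\sum_{i+j=n-3}d_id_j\qquad (n\ge 1),
\]
with the convention $d_{-1}=d_{-2}=0$. The right-hand side involves only $d_k$ with $k<n$, so by induction every power series solution has the same coefficients. Hence~\eqref{eq:D} determines $D$ completely. As a sanity check, this recursion gives $d_1=1$, $d_2=2$, $d_3=4$, $d_4=8$, $d_5=17$, matching the series displayed after~\eqref{SerD}.
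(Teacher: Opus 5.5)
Your proof is correct and takes essentially the same route as the paper. The paper derives the quadratic $q[\varphi]_q^2-(q^2+q-1)[\varphi]_q-1=0$ from the self-similarity of $\varphi=\NCF{2,3,3,\ldots}$ and rewrites it in terms of $D$; your Steps 1--3 do exactly this, since your equation for $W_\varphi$ is equivalent to that quadratic under $[\varphi]_q=1+q^2W_\varphi$, and your Step 4 makes explicit the uniqueness among power series that the paper leaves implicit.
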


\begin{proof}
It follows directly from the continued fraction expansion $\varphi=\NCF{2,3,3,\ldots}$
and from~\eqref{qcInf} that the series $\left[\varphi\right]_q$ satisfies the quadratic equation
$$
q\left[\varphi\right]^2_q-
\left(q^2+q-1 \right)\left[\varphi\right]_q -1 =0
$$
(see~\cite{MGO-reals}).
This easily implies~\eqref{eq:D}.
\end{proof}

\begin{definition}
{\rm
The class of ordered rooted trees $\D$ is constructed inductively from one root, $\circ$, denoted by $E$,
with the help of the three ``constructors'', 
where \(Q(T_1,T_2)\) has the ordered left and right subtrees \(T_1,T_2\); see Figure~\ref{fig:golden-constructors}.

\begin{figure}[ht]
\centering
\begin{tikzpicture}[scale=0.95]
  % P1
  \node[treenode] (p1r) at (2,0.25) {};
  \node[treenode] (p1c) at (2,-0.55) {};
  \draw (p1r)--node[right,lab]{$1$}(p1c);
  \node at (2,-1.05) {$P_1(T)$};
    \node[lab] at (2,-1.45) {degree cost $1$};
  % P2
  \node[treenode] (p2r) at (4.3,0.25) {};
  \node[treenode] (p2c) at (4.3,-0.55) {};
  \draw (p2r)--node[right,lab]{$2$}(p2c);
  \node at (4.3,-1.05) {$P_2(T)$};
    \node[lab] at (4.3,-1.45) {degree cost $2$};
  % Q
  \node[treenode] (qr) at (6.8,0.25) {};
  \node[treenode] (ql) at (6.35,-0.55) {};
  \node[treenode] (qrr) at (7.25,-0.55) {};
  \draw (qr)--node[left,lab]{$ $}(ql);
  \draw (qr)--node[right,lab]{$ $}(qrr);
  \node at (6.8,-1.05) {$Q(T_1,T_2)$};
  \node[lab] at (6.8,-1.45) {degree cost $3$};
\end{tikzpicture}
\caption{The three constructors of the golden class $\D$.}
\label{fig:golden-constructors}
\end{figure}

\noindent
Every tree from the class $\D$ has a well-defined degree so that 
$$
\D=\bigsqcup_{N\geq0}{\D_N}.
$$

In other words, \(\D\) is the plane rooted tree class satisfying the recurrent formula
\[
\D
=
E\sqcup P_1(\D)\sqcup P_2(\D)\sqcup Q(\D,\D),
\]
with degrees
\[
\deg E=0,
\qquad
\deg P_1(T)=1+\deg T,
\qquad
\deg P_2(T)=2+\deg T,
\]
\[
\deg Q(T_1,T_2)=3+\deg T_1+\deg T_2.
\]
}
\end{definition}

To illustrate this construction, let us give a few examples.

\begin{example}
{\rm
(a) 
The root $E$ is the only tree of degree $0$.

(b)
$P_1(E)$ is the only tree of degree $1$.

(c) 
The set $\D_2$ consists of two trees: $P_{11}(E):=P_1(P_1(E))$ and $P_2(E)$.

(d)
The set $\D_3$ of four trees of degree $3$ is presented in Figure~\ref{fig:D-degree-3}.
\begin{figure}[htbp]
\centering
\begin{tikzpicture}[x=1cm,y=1cm]

\begin{scope}[shift={(0.00,0.00)}]
  \node[treenode] (dFour1000) at (0.00,0.00) {};
  \node[treenode] (dFour1001) at (0.00,-0.6) {};
  \node[treenode] (dFour1002) at (0.00,-1.2) {};
  \node[treenode] (dFour1003) at (0.00,-1.8) {};
  \draw (dFour1002) -- node[midway,right,lab,inner sep=1pt]{$ 1 $} (dFour1003);
  \draw (dFour1001) -- node[midway,right,lab,inner sep=1pt]{$ 1 $} (dFour1002);
  \draw (dFour1000) -- node[midway,right,lab,inner sep=1pt]{$ 1 $} (dFour1001);
  \node[lab,align=center,text width=3.25cm] at (0,-2.2) {$ P_{111}(E) $};
\end{scope}
\begin{scope}[shift={(3.45,0.00)}]
  \node[treenode] (dFour2000) at (0.00,0.00) {};
  \node[treenode] (dFour2001) at (0.00,-0.6) {};
  \node[treenode] (dFour2002) at (0.00,-1.2) {};
  \draw (dFour2001) -- node[midway,right,lab,inner sep=1pt]{$ 2 $} (dFour2002);
  \draw (dFour2000) -- node[midway,right,lab,inner sep=1pt]{$ 1 $} (dFour2001);
  \node[lab,align=center,text width=3.25cm] at (0,-1.7) {$ P_{12}(E) $};
\end{scope}
\begin{scope}[shift={(6.90,0.00)}]
  \node[treenode] (dFour3000) at (0.00,0.00) {};
  \node[treenode] (dFour3001) at (0.00,-0.6) {};
  \node[treenode] (dFour3002) at (0.00,-1.2) {};
  \draw (dFour3001) -- node[midway,right,lab,inner sep=1pt]{$ 1 $} (dFour3002);
  \draw (dFour3000) -- node[midway,right,lab,inner sep=1pt]{$ 2 $} (dFour3001);
  \node[lab,align=center,text width=3.25cm] at (0,-1.7) {$ P_{21}(E) $};
\end{scope}
\begin{scope}[shift={(10.35,0.00)}]
  \node[treenode] (dFour4001) at (0.00,-0.00) {};
  \node[treenode] (dFour4002) at (-0.30,-0.7) {};
  \node[treenode] (dFour4003) at (0.30,-0.7) {};
  \draw (dFour4001) -- (dFour4002);
  \draw (dFour4001) -- (dFour4003);
  \node[lab,align=center,text width=3.25cm] at (0,-1.2) {$ Q(E,\,E) $};
\end{scope}

\end{tikzpicture}
\caption{The four trees of $\D_3=\{T\in\D:\deg T=3\}$.}
\label{fig:D-degree-3}
\end{figure}

(e)
The eight trees of  $\D_4$ are depicted in Figure~\ref{fig:D-degree-4}.
Here and below we use the composition convention in symbols such as \(P_{112}(E):=P_1(P_1(P_2(E)))\).
\begin{figure}[htbp]
\centering
\begin{tikzpicture}[x=1cm,y=1cm]

\begin{scope}[shift={(0.00,0.00)}]
  \node[treenode] (dFour1000) at (0.00,0.00) {};
  \node[treenode] (dFour1001) at (0.00,-0.45) {};
  \node[treenode] (dFour1002) at (0.00,-0.90) {};
  \node[treenode] (dFour1003) at (0.00,-1.35) {};
  \node[treenode] (dFour1004) at (0.00,-1.80) {};
  \draw (dFour1003) -- node[midway,right,lab,inner sep=1pt]{$ 1 $} (dFour1004);
  \draw (dFour1002) -- node[midway,right,lab,inner sep=1pt]{$ 1 $} (dFour1003);
  \draw (dFour1001) -- node[midway,right,lab,inner sep=1pt]{$ 1 $} (dFour1002);
  \draw (dFour1000) -- node[midway,right,lab,inner sep=1pt]{$ 1 $} (dFour1001);
  \node[lab,align=center,text width=3.25cm] at (0,-2.22) {$ P_{1111}(E) $};
\end{scope}
\begin{scope}[shift={(3.45,0.00)}]
  \node[treenode] (dFour2000) at (0.00,0.00) {};
  \node[treenode] (dFour2001) at (0.00,-0.45) {};
  \node[treenode] (dFour2002) at (0.00,-0.90) {};
  \node[treenode] (dFour2003) at (0.00,-1.35) {};
  \draw (dFour2002) -- node[midway,right,lab,inner sep=1pt]{$ 2 $} (dFour2003);
  \draw (dFour2001) -- node[midway,right,lab,inner sep=1pt]{$ 1 $} (dFour2002);
  \draw (dFour2000) -- node[midway,right,lab,inner sep=1pt]{$ 1 $} (dFour2001);
  \node[lab,align=center,text width=3.25cm] at (0,-1.77) {$ P_{112}(E) $};
\end{scope}
\begin{scope}[shift={(6.90,0.00)}]
  \node[treenode] (dFour3000) at (0.00,0.00) {};
  \node[treenode] (dFour3001) at (0.00,-0.45) {};
  \node[treenode] (dFour3002) at (0.00,-0.90) {};
  \node[treenode] (dFour3003) at (0.00,-1.35) {};
  \draw (dFour3002) -- node[midway,right,lab,inner sep=1pt]{$ 1 $} (dFour3003);
  \draw (dFour3001) -- node[midway,right,lab,inner sep=1pt]{$ 2 $} (dFour3002);
  \draw (dFour3000) -- node[midway,right,lab,inner sep=1pt]{$ 1 $} (dFour3001);
  \node[lab,align=center,text width=3.25cm] at (0,-1.77) {$ P_{121}(E) $};
\end{scope}
\begin{scope}[shift={(10.35,0.00)}]
  \node[treenode] (dFour4000) at (0.00,0.00) {};
  \node[treenode] (dFour4001) at (0.00,-0.45) {};
  \node[treenode] (dFour4002) at (-0.30,-1.00) {};
  \node[treenode] (dFour4003) at (0.30,-1.00) {};
  \draw (dFour4001) -- (dFour4002);
  \draw (dFour4001) -- (dFour4003);
  \draw (dFour4000) -- node[midway,right,lab,inner sep=1pt]{$ 1 $} (dFour4001);
  \node[lab,align=center,text width=3.25cm] at (0,-1.42) {$ P_{1}(Q(E,\,E)) $};
\end{scope}
\begin{scope}[shift={(0.00,-2.85)}]
  \node[treenode] (dFour5000) at (0.00,0.00) {};
  \node[treenode] (dFour5001) at (0.00,-0.45) {};
  \node[treenode] (dFour5002) at (0.00,-0.90) {};
  \node[treenode] (dFour5003) at (0.00,-1.35) {};
  \draw (dFour5002) -- node[midway,right,lab,inner sep=1pt]{$ 1 $} (dFour5003);
  \draw (dFour5001) -- node[midway,right,lab,inner sep=1pt]{$ 1 $} (dFour5002);
  \draw (dFour5000) -- node[midway,right,lab,inner sep=1pt]{$ 2 $} (dFour5001);
  \node[lab,align=center,text width=3.25cm] at (0,-1.77) {$ P_{211}(E) $};
\end{scope}
\begin{scope}[shift={(3.45,-2.85)}]
  \node[treenode] (dFour6000) at (0.00,0.00) {};
  \node[treenode] (dFour6001) at (0.00,-0.45) {};
  \node[treenode] (dFour6002) at (0.00,-0.90) {};
  \draw (dFour6001) -- node[midway,right,lab,inner sep=1pt]{$ 2 $} (dFour6002);
  \draw (dFour6000) -- node[midway,right,lab,inner sep=1pt]{$ 2 $} (dFour6001);
  \node[lab,align=center,text width=3.25cm] at (0,-1.32) {$ P_{22}(E) $};
\end{scope}
\begin{scope}[shift={(6.90,-2.85)}]
  \node[treenode] (dFour7000) at (0.00,0.00) {};
  \node[treenode] (dFour7001) at (-0.30,-0.55) {};
  \node[treenode] (dFour7002) at (0.30,-0.55) {};
  \node[treenode] (dFour7003) at (0.30,-1.00) {};
  \draw (dFour7002) -- node[midway,right,lab,inner sep=1pt]{$ 1 $} (dFour7003);
  \draw (dFour7000) -- (dFour7001);
  \draw (dFour7000) -- (dFour7002);
  \node[lab,align=center,text width=3.25cm] at (0,-1.42) {$ Q(E,\,P_{1}(E)) $};
\end{scope}
\begin{scope}[shift={(10.35,-2.85)}]
  \node[treenode] (dFour8000) at (0.00,0.00) {};
  \node[treenode] (dFour8001) at (-0.30,-0.55) {};
  \node[treenode] (dFour8002) at (-0.30,-1.00) {};
  \draw (dFour8001) -- node[midway,right,lab,inner sep=1pt]{$ 1 $} (dFour8002);
  \node[treenode] (dFour8003) at (0.30,-0.55) {};
  \draw (dFour8000) -- (dFour8001);
  \draw (dFour8000) -- (dFour8003);
  \node[lab,align=center,text width=3.25cm] at (0,-1.42) {$ Q(P_{1}(E),\,E) $};
\end{scope}

\end{tikzpicture}
\caption{The eight trees of $\D_4=\{T\in\D:\deg T=4\}$.}
\label{fig:D-degree-4}
\end{figure}

}
\end{example}

The following statement is straightforward.

\begin{proposition}
\label{GolTreeProp}
The coefficient $[q^N]D(q)$ of the series $D(q)$ counts the number of trees of degree~$N$,
that is the number of elements in $\D_N$. 
\end{proposition}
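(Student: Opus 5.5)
We compare generating functions. Let $G(q):=\sum_{N\ge0}|\D_N|\,q^N$ be the degree generating function of the class $\D$. The plan is to show that $G$ satisfies the same functional equation~\eqref{eq:D} as $D$. Then the uniqueness part of Proposition~\ref{DProp} gives $G=D$, which is the claim.

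First we check that each $\D_N$ is finite, so that $G$ is a well-defined formal power series. Every constructor raises the degree by at least one, so a tree of degree $N$ is built using at most $N$ constructor applications. Hence there are only finitely many such trees.

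Next we use that the decomposition
\[
\D=E\sqcup P_1(\D)\sqcup P_2(\D)\sqcup Q(\D,\D)
\]
is a disjoint union and that each constructor is injective. Disjointness holds because the four pieces are distinguished by the shape at the root: no child, one child along an edge labelled $1$, one child along an edge labelled $2$, or two ordered children. Injectivity holds because $T$, respectively the ordered pair $(T_1,T_2)$, is recovered by deleting the root; the order of the subtrees in $Q$ matters because the trees are plane trees. Since degrees add as in the definition, the pieces contribute $1$, $qG$, $q^2G$ and $q^3G^2$ respectively. Comparing coefficients of $q^N$ gives
\[
|\D_N|=\delta_{N,0}+|\D_{N-1}|+|\D_{N-2}|+\sum_{i+j=N-3}|\D_i|\,|\D_j|,
\]
that is, $G=1+qG+q^2G+q^3G^2$.

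Finally, this recurrence determines the coefficients one at a time from the initial value $1$, because the right-hand side at order $N$ only involves coefficients of order less than $N$. The same is true of the coefficients of $D$ by Proposition~\ref{DProp}, so $G=D$.

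The only step needing care is the unique-decomposition claim. It depends on the edge labels $1,2$ distinguishing $P_1$ from $P_2$, and on the trees being ordered so that $Q(T_1,T_2)\neq Q(T_2,T_1)$ when $T_1\neq T_2$. As a sanity check, the counts $1,1,2,4,8$ obtained this way match the trees listed in the examples of $\D_0,\dots,\D_4$.
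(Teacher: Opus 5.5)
Your proof is correct and follows the same route as the paper, which simply notes that the constructors $P_1$, $P_2$, $Q$ correspond to the terms $qD$, $q^2D$, $q^3D^2$ in~\eqref{eq:D}. You additionally make explicit what the paper leaves implicit: that each $\D_N$ is finite, that the decomposition at the root is unique, and that the recurrence has only one power-series solution.
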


\begin{proof}
The constructors, $P_1,P_2$, and $Q$, correspond to the three terms,
$qD,q^2D$, and $q^3D^2$ in the right-hand side of~\eqref{eq:D}.
\end{proof}

%%%%%%%%%%%%%%%%%%%
\subsection{Signed rooted trees for general \texorpdfstring{$q$}{q}-irrationals}\label{TreeGen}
%%%%%%%%%%%%%%%%%%%

Let us now describe a recursive procedure that assigns a class of rooted trees
to a normalized $q$-real $[x]_q$ with $x\in(1,2)$.  Arbitrary real numbers are
reduced to this range by integer translation.
The recursion uses the following operation.

\begin{definition}
{\rm
For a positive integer \(a\ge 2\), let \(R_a\) denote the operation of prepending \(a\):
\[
  R_a(x)=\NCF{2,a,a_1,a_2,\ldots}
  \qquad\text{if}\qquad
  x=\NCF{2,a_1,a_2,\ldots}.
\]
}
\end{definition}

We will need an explicit formula for the action of $R_a$ on
the tail series $W_x$ defined by~\eqref{X2}.
Slightly abusing the notation, we will denote this action by~$R_a(W_x)$.

\begin{lemma}
\label{LittleL}
We have
\begin{equation}
\label{eq:Ra}
R_a(W_x)=
\frac{\qint{a-2}+q^{a-1}W_x}
     {\qint{a-1}+q^aW_x}.
\end{equation}
\end{lemma}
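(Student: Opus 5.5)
Write $y=R_a(x)=\NCF{2,a,a_1,a_2,\ldots}$. Its tail is $\tilde y=\NCF{a,a_1,a_2,\ldots}$, while the tail of $x$ is $\tilde x=\NCF{a_1,a_2,\ldots}$. The plan is to pass through the tail series, using Lemma~\ref{TaiL} and its converse~\eqref{Conv}.

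\emph{Step 1: a recurrence for the tails.} By~\eqref{qcInf},
$$
[\tilde y]_q=[a]_q-\frac{q^{a-1}}{[\tilde x]_q}.
$$
By~\eqref{Conv} we have $1/[\tilde x]_q=1-qW_x$. Substituting gives
$$
[\tilde y]_q=[a]_q-q^{a-1}+q^aW_x=[a-1]_q+q^aW_x,
$$
since $[a]_q-q^{a-1}=[a-1]_q$.

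\emph{Step 2: apply Lemma~\ref{TaiL} to $y$.} This gives
$$
R_a(W_x)=W_y=\frac{[\tilde y]_q-1}{q[\tilde y]_q}.
$$
Insert the expression from Step~1 and simplify the numerator, using $[a-1]_q-1=q[a-2]_q$. Then
$$
[\tilde y]_q-1=q\bigl([a-2]_q+q^{a-1}W_x\bigr).
$$
The factor $q$ cancels with the $q$ in the denominator $q[\tilde y]_q$, and what remains is exactly~\eqref{eq:Ra}.

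\emph{Step 3: formal legitimacy.} All manipulations take place in $\Z[[q]]$. The only points needing a remark are the divisions by $[\tilde x]_q$ and $[\tilde y]_q$. Both series have constant term $1$: for $[\tilde y]_q=[a-1]_q+q^aW_x$ this holds because $a\ge2$, and for $[\tilde x]_q$ it follows from~\eqref{Conv}. Hence both are invertible.

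For rational $x$ we use the infinite expansion ending in $2,2,\ldots$. The identities then hold by the same computation, or alternatively by stabilization from finite truncations.

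I expect no real obstacle here; the only care needed is the bookkeeping of $q$-integer identities such as $[a]_q-q^{a-1}=[a-1]_q$.
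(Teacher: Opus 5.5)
Your proof is correct and follows essentially the same route as the paper. You rewrite $[\tilde y]_q=[a-1]_q+q^aW_x$ using \eqref{qcInf} and \eqref{Conv}, then apply Lemma~\ref{TaiL} to $y$ and cancel the factor $q$ via $[a-1]_q-1=q[a-2]_q$. Your Step~3, checking that the divisions are by $q$-adic units, is a small explicit addition that the paper leaves implicit.
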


\begin{proof}
Let as above $\tilde x=\NCF{a_1,a_2,\ldots}$.
Then
$$
\NCF{a,a_1,a_2,\ldots}_q=
\qint{a}-\frac{q^{a-1}}{[\tilde x]_q}.
$$
Applying~\eqref{Conv} we have
$$
\NCF{a,a_1,a_2,\ldots}_q=
\qint{a}+q^{a}W_x-q^{a-1}=
\qint{a-1}+q^{a}W_x.
$$
(To simplify the notation, here and below we write $\NCF{a,a_1,a_2,\ldots}_q$ 
instead of \(\bigl[\NCF{a,a_1,a_2,\ldots}\bigr]_q\).)
Therefore \eqref{DefW} implies
$$
R_a(W_x)=
\frac{\qint{a-1}+q^{a}W_x-1}{q\left(\qint{a-1}+q^{a}W_x\right)}=
\frac{\qint{a-2}+q^{a-1}W_x}
     {\qint{a-1}+q^aW_x},
$$
as desired.
\end{proof}

If $H=R_a(W_x)$ and $W=W_x$, 
then~\eqref{eq:Ra} is equivalent to the recursive equation
\begin{equation}
\label{eq:single-digit-rec}
H
  =
  \qint{a-2}+q^{a-1}W
  -q\qint{a-2}H
  -q^aWH.
\end{equation}
This will be our ``tree equation'' that will allow us to associate a class of trees with $R_a(x)$,
provided $x$ already corresponds to such a class.
The inductive procedure is the following.
Each summand in~\eqref{eq:single-digit-rec} gives a constructor:
\begin{itemize}[leftmargin=2em]
  \item \(\qint{a-2}\) adds $a-2$ terminal objects of degrees $0,1,2,\ldots,a-3$;
  \item \(q^{a-1}W\) attaches a decoration of degree \(a-1\) to each previously constructed \(W\)-object
  (see Figure~\ref{fig:general-signed-constructors});
  \item \(-q\qint{a-2}H\) attaches $a-2$ unary decorations
  of degrees $1,2,3,\ldots,a-2$ to each previously constructed \(H\)-object, with negative sign;
  \item \(-q^aWH\): combine a \(W\)-object and an \(H\)-object into a new rooted tree, again with negative sign.
\end{itemize}

To make the construction more explicit, suppose that the series $W$ is already
represented by a signed graded family $\mathcal W$, and let $\mathcal H$ denote
the signed family to be constructed for $H=R_a(W)$.  Equation~\eqref{eq:single-digit-rec}
then has the following direct interpretation.  We use four types of constructors:
\begin{align*}
  &\tau_j, &&0\le j\le a-3,\\
  &U_{a-1}(T), &&T\in\mathcal W,\\
  &N_j(S), &&1\le j\le a-2,\quad S\in\mathcal H,\\
  &B_a(T,S), &&T\in\mathcal W,\quad S\in\mathcal H.
\end{align*}
Their degrees and signs are
\begin{align*}
 \deg \tau_j&=j,
 &\sgn(\tau_j)&=+1,\\
 \deg U_{a-1}(T)&=a-1+\deg T,
 &\sgn U_{a-1}(T)&=\sgn T,\\
 \deg N_j(S)&=j+\deg S,
 &\sgn N_j(S)&=-\sgn S,\\
 \deg B_a(T,S)&=a+\deg T+\deg S,
 &\sgn B_a(T,S)&=-\sgn T\,\sgn S.
\end{align*}
Thus the labels on the unary edges and at the branching vertex record the
extra degree contributed by the corresponding constructor.  Figure~\ref{fig:general-signed-constructors}
summarizes the four operations.

\begin{figure}[htbp]
\centering
\begin{tikzpicture}[x=1cm,y=1cm]
  % terminal
  \begin{scope}[shift={(0.8,0)}]
    \node[treenode] (t) at (0,0.35) {};
    \node[lab] at (0,-0.15) {$\tau_j$};
    \node[lab,align=center] at (0,-0.72) {$\deg=j$\\$\sgn=+$};
  \end{scope}

  % positive W constructor
  \begin{scope}[shift={(4.0,0)}]
    \node[treenode] (ur) at (0,0.75) {};
    \node[draw,rounded corners,minimum width=1.25cm,minimum height=0.52cm,font=\scriptsize] (ut) at (0,-0.05) {$T\in\mathcal W$};
    \draw (ur)--node[midway,right,lab]{$a-1$}(ut);
    \node[lab] at (0,-0.65) {$U_{a-1}(T)$};
    \node[lab,align=center] at (0,-1.12) {$\sgn=\sgn T$};
  \end{scope}

  % negative unary H constructor
  \begin{scope}[shift={(7.4,0)}]
    \node[treenode] (nr) at (0,0.75) {};
    \node[draw,rounded corners,minimum width=1.25cm,minimum height=0.52cm,font=\scriptsize] (ns) at (0,-0.05) {$S\in\mathcal H$};
    \draw (nr)--node[midway,right,lab]{$j$}(ns);
    \node[lab] at (0,-0.65) {$N_j(S)$};
    \node[lab,align=center] at (0,-1.12) {$1\le j\le a-2$\\$\sgn=-\sgn S$};
  \end{scope}

  % negative binary constructor
  \begin{scope}[shift={(11.2,0)}]
    \node[treenode] (br) at (0,0.75) {};
    \node[lab] at (0.28,0.72) {$a$};
    \node[draw,rounded corners,minimum width=1.15cm,minimum height=0.52cm,font=\scriptsize] (bt) at (-0.75,-0.08) {$T\in\mathcal W$};
    \node[draw,rounded corners,minimum width=1.15cm,minimum height=0.52cm,font=\scriptsize] (bs) at (0.75,-0.08) {$S\in\mathcal H$};
    \draw (br)--(bt);
    \draw (br)--(bs);
    \node[lab] at (0,-0.68) {$B_a(T,S)$};
    \node[lab,align=center] at (0,-1.12) {$\sgn=-\sgn T\,\sgn S$};
  \end{scope}
\end{tikzpicture}
\caption{The four signed constructors associated with one continued-fraction digit $a$.
The number beside an edge or a branching vertex is its degree cost.}
\label{fig:general-signed-constructors}
\end{figure}

At this point the trees retain their constructor labels and signs.  In
particular, two identical unweighted shapes may still represent different
objects because their edge costs or terminal degrees are different.  The later
normal-form construction replaces this formal recursive description by a
collision-free encoding and a degree-preserving injection into the golden class $\D$.

\begin{example}
{\rm
Let
\[
 x_4=\NCF{2,4,3,3,3,\ldots}
\]
and take as input the golden tail $W=W_\varphi$.  The corresponding
series $H=W_{x_4}$ satisfies
\begin{equation}
\label{eq:digit-four-tree}
 H=1+q+q^3W-(q+q^2)H-q^4WH.
\end{equation}
The two positive terminal objects are $\tau_0$ and $\tau_1$.  Already in the
first two degrees one sees the cancellation mechanism.  In degree $1$ the
positive object $\tau_1$ cancels the negative object $N_1(\tau_0)$.  In degree
$2$ the three objects have signs $-,-,+$, as shown in
Figure~\ref{fig:digit-four-low-degrees}.

\begin{figure}[htbp]
\centering
\begin{tikzpicture}[x=1cm,y=1cm,
  terminal/.style={circle,draw,minimum size=6mm,inner sep=0pt,font=\scriptsize}]
  \node[lab,anchor=east] at (-0.2,0.25) {degree $1$:};

  \begin{scope}[shift={(1.35,0.25)}]
    \node[lab] at (0,0.55) {$+$};
    \node[terminal] (d11) at (0,0) {$1$};
    \node[lab] at (0,-0.52) {$\tau_1$};
  \end{scope}
  \begin{scope}[shift={(4.15,0.25)}]
    \node[lab] at (0,0.75) {$-$};
    \node[treenode] (d12r) at (0,0.25) {};
    \node[terminal] (d12t) at (0,-0.55) {$0$};
    \draw (d12r)--node[midway,right,lab]{$1$}(d12t);
    \node[lab] at (0,-1.05) {$N_1(\tau_0)$};
  \end{scope}
  \node[lab] at (6.0,0.25) {$[q]H=1-1=0$};

  \node[lab,anchor=east] at (-0.2,-2.45) {degree $2$:};
  \begin{scope}[shift={(1.35,-2.45)}]
    \node[lab] at (0,0.75) {$-$};
    \node[treenode] (d21r) at (0,0.25) {};
    \node[terminal] (d21t) at (0,-0.55) {$0$};
    \draw (d21r)--node[midway,right,lab]{$2$}(d21t);
    \node[lab] at (0,-1.05) {$N_2(\tau_0)$};
  \end{scope}
  \begin{scope}[shift={(4.15,-2.45)}]
    \node[lab] at (0,0.75) {$-$};
    \node[treenode] (d22r) at (0,0.25) {};
    \node[terminal] (d22t) at (0,-0.55) {$1$};
    \draw (d22r)--node[midway,right,lab]{$1$}(d22t);
    \node[lab] at (0,-1.05) {$N_1(\tau_1)$};
  \end{scope}
  \begin{scope}[shift={(7.10,-2.45)}]
    \node[lab] at (0,0.98) {$+$};
    \node[treenode] (d23r) at (0,0.48) {};
    \node[treenode] (d23m) at (0,-0.12) {};
    \node[terminal] (d23t) at (0,-0.88) {$0$};
    \draw (d23r)--node[midway,right,lab]{$1$}(d23m);
    \draw (d23m)--node[midway,right,lab]{$1$}(d23t);
    \node[lab] at (0,-1.38) {$N_1(N_1(\tau_0))$};
  \end{scope}
  \node[lab] at (10.35,-2.45) {$[q^2]H=-1-1+1=-1$};
\end{tikzpicture}
\caption{The first cancellations for the digit $a=4$.  The number inside a
terminal vertex is its intrinsic degree; edge labels are unary degree costs.}
\label{fig:digit-four-low-degrees}
\end{figure}

Using $W=1-q+2q^2-4q^3+\cdots$ in~\eqref{eq:digit-four-tree} gives
\[
 H=1-q^2+2q^3-3q^4+4q^5-6q^6+\cdots,
\]
and hence
\[
 [x_4]_q=1+q^2-q^4+2q^5-3q^6+4q^7-6q^8+\cdots.
\]
This example shows why the trees associated with a general $x$ must be signed:
different construction histories of the same total degree can cancel.
}
\end{example}

%%%%%%%%%%%%%%%%%%%
\subsection{A strategy of the proof}
%%%%%%%%%%%%%%%%%%%

The idea of our proof of Theorem \ref{DomThm} is as follows.
The desired domination for $[x]_q$ reduces to a domination statement for the coefficients of the series $W_x$
compared with the series $D$ defined in~\eqref{SerD}:
$$
  \left|\coeff{N}{W_x}\right|\le \left|\coeff{N}{D}\right|.
$$
To prove this domination, we will consider the class of trees $\D$ associated with $D$
and show that the class of trees $\W_x$ associated with $W_x$ can be embedded into $\D$.
Already this will guarantee the domination, but the sign cancellation explained above amplifies it further.

For convenience, we introduce the following general notion.

\begin{definition}
\label{def:signed-model}
{\rm
A \emph{signed golden model} for a series 
$$
A(q)=\sum_{n\ge0}a_nq^n\in\mathbb Z[[q]]
$$ 
 consists of a signed graded set \(\mathcal A\), 
with finite graded pieces, and a degree-preserving injection
\[
 I_A:\mathcal A\hookrightarrow\D
\]
such that
\[
 a_n=\sum_{X\in\mathcal A_n}\sgn(X).
\]
}
\end{definition}

The zero series has the empty signed model.  
The constant series \(1\) has the one-point model consisting of the empty word \(E\).  
If \(A\) has a signed golden model, then
\[
|a_n|\leq |\D_n|=\coeff{n}{D(q)}.
\]

The notion of signed golden model can be reformulated as follows.
For each coefficient degree \(N\ge2\), we construct a function
\[
  \varepsilon_{x,N}:\D_{N-2}\to\{0,\pm1\}
\]
such that
\begin{equation}
\label{EpsEq}
[q^N][x]_q=\sum_{T\in\D_{N-2}}\varepsilon_{x,N}(T)\quad(N\ge2).
\end{equation}
Note that the shift $N-2$ in~\eqref{EpsEq} is due to the fact that $[x]_q$ is connected to $W_x$ via
$[x]_q=1+q^2W_x$.

The domination inequality of Theorem \ref{DomThm} follows immediately 
from the existence of a signed golden model for  the tail series \(W_x\) 
associated with a given $q$-number $[x]_q$.

%%%%%%%%%%%%%%%%%%%
%%%%%%%%%%%%%%%%%%%
\section{Proof of the main theorem}\label{ProofSec}
%%%%%%%%%%%%%%%%%%%
%%%%%%%%%%%%%%%%%%%

We now prove that the operations arising from the negative continued fraction
preserve signed golden models.  
This will constitute an inductive proof of Theorem~\ref{DomThm}.

%%%%%%%%%%%%%%%%%%%
\subsection{Right-spine decomposition}
%%%%%%%%%%%%%%%%%%%

Before we start the proof, let us fix a useful notation.

For $R,T\in\D$, introduce the three right-spine operations
\[
 uR:=P_1(R),
 \qquad
 vR:=P_2(R),
 \qquad
 b(T)R:=Q(T,R).
\]
Their degree costs are respectively
\[
 1,
 \qquad 2,
 \qquad 3+\deg T.
\]
Starting from $E$, a finite sequence of symbols $u$, $v$, and $b(T)$ therefore
produces a tree in~$\D$.  We write the sequence from left to right, beginning
at the root and following the rightmost branch.  Thus, for example,
\[
 u^rvu^sE
\]
means that one first applies $r$ constructors $P_1$, then $P_2$, then $s$
constructors $P_1$ (see for instance Figure~\ref{fig:D-degree-4}).

The following lemma shows that the chosen right-spine expression is unique.

\begin{lemma}
\label{lem:right-spine}
Every tree in~$\D$ is represented by a unique finite right-spine sequence in
the symbols $u$, $v$, and $b(T)$.
\end{lemma}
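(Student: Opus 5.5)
The plan is to argue by induction on the number of vertices of a tree in $\D$ (equivalently, on its structure as generated by the recursive definition $\D=E\sqcup P_1(\D)\sqcup P_2(\D)\sqcup Q(\D,\D)$). This is a free construction: each tree is either the single root $E$, or is obtained from exactly one constructor applied to uniquely determined subtrees. Existence and uniqueness of the right-spine sequence then follow by peeling off the root constructor.

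For \emph{existence}, the case of $E$ is handled by the empty sequence. Otherwise the root of $X\in\D$ has either one child, with the edge labelled $1$ or $2$, or two children. In the first case $X=P_1(R)$ or $X=P_2(R)$, so $X=uR$ or $X=vR$. In the second case $X=Q(T,R)$, so $X=b(T)R$. In each case $R$ is a tree in $\D$ with fewer vertices, so by induction it has a right-spine sequence $w$ with $R=wE$. Prepending $u$, $v$ or $b(T)$ to $w$ gives a sequence for $X$. Note that the left subtree $T$ is carried along as an opaque parameter of the symbol $b(T)$; we do not decompose it further.

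For \emph{uniqueness}, suppose two sequences $s_1s_2\cdots s_k E$ and $s'_1\cdots s'_{l}E$ both give $X$. If one of them is empty then $X=E$, and $E$ has no children, so the other must also be empty. Otherwise the first symbol is determined by the root of $X$:
\begin{itemize}
\item the number of children of the root (one or two) distinguishes $u,v$ from $b(\cdot)$;
\item the edge label ($1$ or $2$) distinguishes $u$ from $v$, since the trees are edge-labelled;
\item for $b$, the left subtree $T$ is read off as the subtree at the first child, so $s_1=s'_1=b(T)$ with the same $T$.
\end{itemize}
After removing the root, the remaining sequences both represent the same subtree $R$ (the unique child, or the right child). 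Induction on the vertex count then gives $s_2\cdots s_k=s'_2\cdots s'_l$.

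The only point requiring care is the claim that the constructors are \emph{jointly injective with disjoint images}: that $P_1$, $P_2$, $Q$ never produce the same labelled ordered tree, and that each is injective. This is the statement that $\D$ is the free term algebra on these constructors. It holds because trees in $\D$ are ordered and carry edge labels $1,2$ on unary edges, with binary vertices unlabelled (cost $3$). I would make this explicit by identifying $\D$ with the set of formal terms in $E,P_1,P_2,Q$; this is also implicit in Proposition~\ref{GolTreeProp}, whose counting needs the disjoint union. Once this is granted, the rest is routine.
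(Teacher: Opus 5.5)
Your proof is correct and takes the same route as the paper: read off the root constructor ($P_1$, $P_2$, or $Q(T,\cdot)$, where the last case also determines $T$), then recurse on the remaining right subtree. Your explicit remark that this relies on $\D$ being freely generated by the constructors is what the paper uses implicitly when it says a nontrivial tree is \emph{uniquely} of one of the three forms. That freeness means disjoint images, injective constructors, and edge labels distinguishing $P_1$ from $P_2$.
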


\begin{proof}
Inspect the root.  A nontrivial tree is uniquely of one of the forms
$P_1(R)$, $P_2(R)$, or $Q(T,R)$.  This determines the first symbol, and in the
third case also determines the left subtree~$T$.  Repeating the argument with
the remaining right subtree gives the result.
\end{proof}

This elementary parsing will be used to embed the signed trees associated with
a general tail into~$\D$.  Unary strings $u^r$ will record the geometric-series
expansions that appear when the recursive equations are solved, while the
symbols $v$ and $b(T)$ will serve as visible delimiters.

%%%%%%%%%%%%%%%%%%
\subsection{Prepending one digit}
%%%%%%%%%%%%%%%%%%

The first closure statement treats a single digit greater than~$2$.
This is an important particular step of the proof.

\begin{lemma}
\label{lem:safe}
Let $m\ge1$.  If a series $W$ has a signed golden model, then
$R_{m+2}(W)$
has a signed golden model.
\end{lemma}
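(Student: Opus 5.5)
The plan is to write $a=m+2$ and expand the tree equation \eqref{eq:single-digit-rec} as a geometric series. By Lemma~\ref{LittleL},
\[
 H=R_{m+2}(W)=\frac{A}{1+B},\qquad A=\qint{m}+q^{m+1}W,\qquad B=q\qint{m}+q^{m+2}W,
\]
so $H=\sum_{k\ge0}(-1)^kB^kA$. This gives a signed model for $H$ made of finite words $\beta_1\cdots\beta_k\alpha$. Here each letter $\beta_i$ is either a "unary" letter $q^j$ ($1\le j\le m$) or a "binary" letter $q^{m+2}T$ with $T\in\mathcal W$. The final letter $\alpha$ is either a terminal $q^j$ ($0\le j\le m-1$) or $q^{m+1}T$ with $T\in\mathcal W$. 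The sign of a word is $(-1)^k$ times the product of the signs of the $T$'s, and its degree is the sum of the letter degrees. This is exactly the signed family of Section~\ref{TreeGen}, unrolled along the right spine. It has finite graded pieces because every $\beta$-letter has positive degree.

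The words must then be encoded into right-spine words of $\D$ (Lemma~\ref{lem:right-spine}), using the given injection $I_W:\mathcal W\hookrightarrow\D$.
\begin{itemize}
\item A binary letter $q^{m+2}T$ becomes $b(I_W T)\,u^{m-1}$. Its cost is $3+\deg T+m-1=m+2+\deg T$, which is legitimate since $m\ge1$.
\item A maximal run of unary letters of total degree $s$ is sent to a canonical $\D$-word of degree $s$.
\item The terminal letters are handled in the same way: $q^j\mapsto u^jE$, and $q^{m+1}T\mapsto b(I_WT)\,u^{m-2}E$ when $m\ge2$.
\end{itemize}
The symbols $b(\cdot)$ act as delimiters, so parsing the image along the right spine recovers the sequence of binary letters and the degrees of the unary runs between them.

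Unary runs are not recovered letter by letter. For instance, $q^1q^2$ and $q^3$ would both be sent to $uv$. So before encoding, I will cancel them with a sign-reversing involution on each run: merge the first two letters if their sum is $\le m$, otherwise split the first letter. The signed count of compositions of $s$ into parts $\le m$ is the coefficient of $q^s$ in $1/(1+q\qint m)$. The involution leaves at most one fixed composition per residue pattern, and the survivors are few enough to be sent injectively to $u$-words, or to $u^rv$-type words, of the right degree. Concretely, I expect the survivors to be the runs consisting of letters $q^m$ and $q^{1}$ in a rigid pattern. Combined with the numerator $\qint m$, this realizes $\qint m/\qint{m+1}=(1-q^m)/(1-q^{m+1})$, whose expansion has coefficients in $\{0,\pm1\}$. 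The injection on survivors is then obvious.

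The step I expect to be the main obstacle is the boundary case $m=1$ for the final letter $q^{m+1}T=q^2T$. This letter has cost only $2+\deg T$, but the cheapest branching symbol costs $3+\deg T$. My first attempt will be to absorb this letter into the preceding spine letter: pair it with the preceding unary letter $q^1$, if there is one, to form $b(I_WT)E$ of degree $3+\deg T$. When no such letter precedes it, I will encode the word as $v$ followed by the image $I_W(T)$ grafted onto the spine. This uses the fact that the image of $W$ itself lies in $\D$, so $vI_W(T)=P_2(I_WT)$ has exactly degree $2+\deg T$. I then need to check that these $v$-prefixed images cannot collide with images of words beginning with a unary run, which again should follow from the choice of canonical run-encodings that avoid a leading $v$.
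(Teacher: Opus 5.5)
Your architecture matches the paper's: expand $H=A/(1+B)$ geometrically, normalize each maximal unary run, use $b(\cdot)$ as a delimiter, and parse along the right spine. For $m\ge2$ your encoding works once the runs are replaced by their pure $u$-string normal forms. The main spine then contains only $u$'s and $b$'s, and the number of $u$'s after the last $b$ is exactly $m-2$ in the final-$W$ case and at least $m-1$ otherwise.

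The genuine gap is $m=1$, where your repair creates a collision. For $m=1$ the binary letter $q^3T$ becomes $b(I_WT)u^0=b(I_WT)$ and the terminal letter $q^0$ becomes $E$. So the word $(q^3T)(q^0)$ is sent to $b(I_WT)E$, and your absorption rule sends $(q^1)(q^2T)$ to the same tree. Both words have degree $3+\deg T$ and the same sign $-\sgn T$. This is therefore not a cancelling pair but a failure of injectivity, and it recurs after any common prefix. Already for $W=1$, $\mathcal W=\{E\}$, the coefficient $[q^3]R_3(1)=-3$ comes from three negative words, which you send to $u^3E$, $b(E)E$, $b(E)E$. The collision you planned to check (the $v$-prefixed images against words starting with a unary run) is not the one that fails.

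The missing idea is a degree-$2$ delimiter for the nonrecursive term $q^{m+1}W$. Always encode the final letter $q^{m+1}T$ as $v\,u^{m-1}I_W(T)$, of degree $2+(m-1)+\deg T$, with no absorption; this is the paper's $M(Y)$. Neither unary runs nor binary letters put a $v$ on the main spine, so the first main-spine $v$ marks the ending and the prefix parses as before. This covers $m=1$ with no special case. The paper puts the pad inside the left subtree, $b(u^{m-1}I_W(Y))$, but your spine placement $b(I_WT)u^{m-1}$ is also compatible with this fix.

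A secondary point: your merge/split rule is not an involution for $m\ge3$; for $m=3$ it sends $(1,1,1)\mapsto(2,1)\mapsto(3)$. A correct pairing compares the run with the pattern $1,m,1,m,\ldots$, as in the paper's alternative proof. But you need no involution at all, since a signed golden model only requires the right signed generating function. You can take the normal forms coming from $1/\qint{m+1}=(1-q)/(1-q^{m+1})$ and $\qint{m}/\qint{m+1}=(1-q^m)/(1-q^{m+1})$ directly as the model, as the paper's first proof does.
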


We give here two alternative proofs of this lemma.
The first is shorter and easier to read, but the second
keeps track of the construction of the family of trees presented in Section~\ref{TreeGen}
and better explains the cancellation mechanism.

\begin{proof}
Put $H=R_{m+2}(W)$.  Equation~\eqref{eq:single-digit-rec} becomes
\[
H=
\qint{m}+q^{m+1}W
-q\qint{m}H
-q^{m+2}WH.
\]
Moving the term $q\qint{m}H$ to the left gives
\begin{equation}
\label{eq:safe-divided}
H=
\frac{\qint{m}}{1+q+\cdots+q^m}
+
\frac{q^{m+1}W}{1+q+\cdots+q^m}
-
\frac{q^{m+2}W}{1+q+\cdots+q^m}\,H.
\end{equation}
We shall use the two elementary identities
\begin{align}
\frac{1}{1+q+\cdots+q^m}
 &=\sum_{s\ge0}
   \bigl(q^{s(m+1)}-q^{1+s(m+1)}\bigr),
\label{eq:unary-inverse-m}\\
\frac{\qint{m}}{1+q+\cdots+q^m}
 &=\sum_{s\ge0}
   \bigl(q^{s(m+1)}-q^{m+s(m+1)}\bigr).
\label{eq:unary-terminal-m}
\end{align}
Here and below, $u^k$ means a string of $k$ consecutive symbols $u$;
$u^0$ is the empty string.  Thus the first identity is represented by choosing,
for some $s\ge0$, either
\[
 u^{s(m+1)} \quad\hbox{with sign }+,
 \qquad\text{or}\qquad
 u^{1+s(m+1)} \quad\hbox{with sign }-.
\]
Similarly, the second identity is represented by choosing either
\[
 u^{s(m+1)} \quad\hbox{with sign }+,
 \qquad\text{or}\qquad
 u^{m+s(m+1)} \quad\hbox{with sign }-.
\]

Let $I_W:\mathcal W\hookrightarrow\D$ be a signed golden model for~$W$.
For $Y\in\mathcal W$, define
\[
 M(Y):=vu^{m-1}I_W(Y),
 \qquad
 L(Y):=b\bigl(u^{m-1}I_W(Y)\bigr).
\]
The first is a final right-spine piece of degree $m+1+\deg Y$ and begins
with the symbol $v$.  The second is the single right-spine symbol
$b(u^{m-1}I_W(Y))$; its degree cost is $m+2+\deg Y$.  Consequently,
$M(Y)$ represents the factor $q^{m+1}W$, while $L(Y)$ represents the
factor $q^{m+2}W$ in the last term of~\eqref{eq:safe-divided}.

We now construct the signed trees for~$H$.  Formula~\eqref{eq:safe-divided}
can be expanded as
\[
H=\sum_{r\ge0}
\left(
 -\frac{q^{m+2}W}{1+q+\cdots+q^m}
\right)^r
\left(
 \frac{\qint{m}}{1+q+\cdots+q^m}
 +\frac{q^{m+1}W}{1+q+\cdots+q^m}
\right),
\]
which is precisely the unique solution of~\eqref{eq:safe-divided} in
$\mathbb Z[[q]]$.

Fix $r\ge0$.  For each $i=1,\ldots,r$, choose an object
$Y_i\in\mathcal W$, choose one of the two $u$-strings representing
$1/(1+q+\cdots+q^m)$ above, and then place the symbol $L(Y_i)$.
After these $r$ blocks, finish in exactly one of the following ways:
\begin{enumerate}[label=\textup{(\roman*)},leftmargin=2.8em]
\item choose one of the two $u$-strings representing
$\qint{m}/(1+q+\cdots+q^m)$;
\item choose one of the two $u$-strings representing
$1/(1+q+\cdots+q^m)$, choose $Y_{r+1}\in\mathcal W$, and append
$M(Y_{r+1})$.
\end{enumerate}
The sign is the product of the signs of the chosen $u$-strings and of all
chosen objects $Y_i$, together with the factor $(-1)^r$.  The latter factor
comes from the minus sign in the last term of~\eqref{eq:safe-divided}.
The degrees and signs therefore agree term by term with the displayed series.
Hence the signed generating function of the constructed trees is~$H$.
For a fixed total degree, the integer $r$ and all unary lengths are bounded;
since the graded pieces of $W$ are finite, only finitely many choices of the
objects $Y_i$ occur.  Thus every graded piece of the constructed model is finite,
as required by Definition~\ref{def:signed-model}.

It remains to prove that the construction is injective.  By
Lemma~\ref{lem:right-spine}, the right-spine sequence of a tree in~$\D$ is
unique.  Every symbol of the form $b(u^{m-1}I_W(Y_i))$ marks one of the first
$r$ blocks.  Its left subtree determines $I_W(Y_i)$, and hence determines
$Y_i$ because $I_W$ is injective.  The number of consecutive $u$'s immediately
before this $b$-symbol determines which term of
\eqref{eq:unary-inverse-m} was chosen: the two possible lengths are congruent
to $0$ and $1$ modulo $m+1$, respectively.

After the last $b$-symbol, case~\textup{(i)} contains only $u$-symbols,
whereas case~\textup{(ii)} contains a first $v$-symbol, which is the beginning
of $M(Y_{r+1})$.  In case~\textup{(i)}, the two possible final lengths are
congruent to $0$ and $m$ modulo $m+1$, so they are also distinguishable.
In case~\textup{(ii)}, scan the main right spine from the root; the repeated blocks are runs of \(u\)'s 
followed by marked \(b(u^{m-1}I_W(Y_i))\)'s, while the first main-spine \(v\) marks the final case~\textup{(ii)}; 
if there is no such \(v\), one is in case~\textup{(i)}.  
Thus every constructed tree has a unique description.
The resulting map into~$\D$ is degree-preserving and injective, and therefore
gives a signed golden model for~$H$.
\end{proof}

\noindent
{\it Alternative proof.}
Let us return to the formal signed family $\mathcal H$ constructed in
Section~\ref{TreeGen}, taking $a=m+2$.  
Every object of that family has a
unique constructor chain: starting from a terminal object $\tau_j$ or
$U_{m+1}(Y)$, one repeatedly applies either a unary constructor $N_j$ or a
binary constructor $B_{m+2}(Y,\,\cdot)$.  
Read from the root toward
the terminal object, such a chain therefore splits uniquely into maximal words
of unary constructors, separated by binary constructors.  
We now check which trees in $\mathcal H$ constructed in Section~\ref{TreeGen}
cancel pairwise (because of the opposite sign);
we call them ``paired''. 
The unpaired uncancelled objects will then be encoded as actual trees in~$\D$.

Every formal constructor tree of Section~\ref{TreeGen} can be read from its root toward its terminal object. 
Consecutive unary constructors
$$
N_{j_1},N_{j_2},\ldots,N_{j_k}
$$ 
will be called a \emph{unary word} and will be denoted by
$$
(j_1,\ldots,j_k).
$$
Its degree cost is $j_1+\cdots+j_k$, and its sign is $(-1)^k$. 
A unary word is called maximal when it is bounded by 
a binary constructor or by a terminal constructor or object.
The identity \eqref{eq:unary-inverse-m} has the following direct cancellation interpretation.  
Compare $(j_1,\ldots,j_k)$, from left to right, with the
infinite alternating word
\[
  1,m,1,m,1,m,\ldots .
\]
At the first discrepancy, replace a letter $j>1$ occurring where $1$ is
expected by the pair $(1,j-1)$; conversely, replace a pair $(1,j)$ with
$j<m$, occurring where $(1,m)$ is expected, by the single letter $j+1$.
These two operations are inverse to one another, preserve the degree, and
reverse the sign.  
The unpaired unary words are precisely
\[
 (1,m)^s \quad\hbox{and}\quad (1,m)^s1,
 \qquad s\ge0,
\]
with respective degrees $s(m+1)$ and $1+s(m+1)$ and respective signs $+$ and $-$.  
We encode them by the strings $u^{s(m+1)}$ and $u^{1+s(m+1)}$.
Every other unary word
is paired with another unary word of the same degree and opposite sign, 
so the two contributions cancel.

Note that the two $u$-strings used above are not a new family
unrelated to Section~\ref{TreeGen}; they are the uncancelled normal forms of
the unary $N_j$-words.

If a final unary word ends in a terminal object $\tau_j$, first perform the
same cancellation while keeping $\tau_j$ fixed.  Among the remaining objects,
for every $1\le j\le m-1$ pair
\[
 (1,m)^s\tau_j
 \quad\longleftrightarrow\quad
 (1,m)^s1\tau_{j-1}.
\]
This again preserves the degree and reverses the sign.  The only unpaired
terminal objects are
\[
 (1,m)^s\tau_0
 \quad\hbox{and}\quad
 (1,m)^s1\tau_{m-1},
\]
which have degrees $s(m+1)$ and $m+s(m+1)$ and signs $+$ and $-$,
respectively.  Encoding them by $u^{s(m+1)}$ and
$u^{m+s(m+1)}$ gives exactly~\eqref{eq:unary-terminal-m}.

Scan the constructor chain from the root and 
apply the local replacement to the first nonnormal maximal unary word; 
if all maximal unary words are normal, apply the displayed terminal pairing in the final word. 
This gives a degree-preserving, sign-reversing involution whose fixed points are exactly the listed normal forms. 
A normalized unary word followed by
$B_{m+2}(Y,\,\cdot)$ is encoded by the corresponding $u$-string
followed by $L(Y)$; a final normalized unary word followed by
$U_{m+1}(Y)$ is encoded by the corresponding $u$-string followed by $M(Y)$;
and a chain ending in some $\tau_j$ is encoded by one of the two terminal
$u$-strings just described.  

Hence the integer $r$ appearing in the construction of the first proof 
is exactly the number of binary constructors in the normalized constructor chain. 
The normal forms obtained above are precisely the right-spine words constructed in the first proof; 
therefore the verification of degrees, signs, and injectivity is the same. 
Finally, only finitely many normal forms occur in every fixed degree, 
since every repeated block has positive degree and the graded pieces of $W$ are finite.
\qed

\begin{remark}
{\rm
(a)
The single digit \(2\) is problematic because
\[
R_2(W)=\frac{qW}{1+q^2W},
\]
and the product term starts in degree \(2\), too small for the golden binary atom \(b(T)\).  
The correct induction unit is a block
$2^{(k)}n$, where $n>2,$
that will be worked out in the next section.
This is also the natural unit in the conversion from negative to positive continued fractions; see Section~\ref{CompSec}.

(b)
The use of two different delimiters is essential but elementary.  The
nonrecursive $W$-term begins with $v$, while the recursive $WH$-term begins
with $b$.  This visible distinction is what prevents different construction
histories from producing the same golden tree.
}
\end{remark}

%%%%%%%%%%%%%%%%%%%%%%%%%%%%%%
\subsection{Continued-fraction blocks}
%%%%%%%%%%%%%%%%%%%%%%%%%%%%%%

By the Hirzebruch conversion formula~\eqref{HZRegEqt}, the natural induction
unit is a block
$2^{(k)}n,$ where $n>2.$ 
Write
\[
 p=k+1,
 \qquad
 m=n-2.
\]
Let us rewrite the equation~\eqref{eq:single-digit-rec} for one block.

\begin{lemma}
\label{lem:block-eq}
For $p\ge2$ and $m\ge1$, the series $H=R_2^{p-1}R_{m+2}(W)$ satisfies
\begin{equation}
\label{eq:block}
H=
q^{p-1}\qint{m}+q^{p+m}W
-q\qint{p}\qint{m}H
-q^{m+2}\qint{p}WH.
\end{equation}
Equivalently, if
\[
 G_m(W):=q\qint{m}+q^{m+2}W,
\]
then
\begin{equation}
\label{eq:block-short}
H=q^{p-2}G_m(W)-\qint{p}G_m(W)H.
\end{equation}
\end{lemma}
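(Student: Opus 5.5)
The plan is a direct computation: write each step of the continued fraction as a M\"obius transformation and compose them. I will not use any tree combinatorics here; only Lemma~\ref{LittleL} is needed.

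First I handle the digit $m+2$. Put $K:=R_{m+2}(W)$ and apply Lemma~\ref{LittleL} with $a=m+2$. The numerator is $\qint{m}+q^{m+1}W=q^{-1}G_m(W)$. For the denominator, use $\qint{m+1}=1+q\qint{m}$ to get $\qint{m+1}+q^{m+2}W=1+G_m(W)$. Writing $G=G_m(W)$, this gives
\[
K=\frac{G}{q(1+G)},\qquad\text{so}\qquad \frac1K=\frac{q(1+G)}{G}.
\]

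Next I handle the $p-1$ digits equal to $2$. Lemma~\ref{LittleL} with $a=2$ gives $R_2(Z)=qZ/(1+q^2Z)$. Its reciprocal form is $1/R_2(Z)=q^{-1}\cdot(1/Z)+q$, which is affine in $1/Z$. Let $y_j:=1/R_2^{\,j}(K)$, so that $y_{j+1}=q^{-1}y_j+q$. Iterating this $p-1$ times gives
\[
y_{p-1}=q^{-(p-1)}y_0+\sum_{i=0}^{p-2}q^{1-i}=q^{-(p-1)}y_0+q^{2-p}\,q\qint{p-1}.
\]
Substituting $y_0=q(1+G)/G$ and using $1+q\qint{p-1}=\qint{p}$, I expect
\[
\frac1H=q^{2-p}\,\frac{1+G+q\qint{p-1}G}{G}=q^{2-p}\,\frac{1+\qint{p}G}{G},
\qquad\text{that is}\qquad H=\frac{q^{p-2}G}{1+\qint{p}G}.
\]
Clearing the denominator gives exactly \eqref{eq:block-short}. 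I will check this by hand at $p=2$, where $1/H=(1+G)/G+q=(1+[2]_qG)/G$.

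Finally I expand \eqref{eq:block-short} to get \eqref{eq:block}, using $q^{p-2}G=q^{p-1}\qint{m}+q^{p+m}W$ and $\qint{p}G=q\qint{p}\qint{m}+q^{m+2}\qint{p}W$.

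One formal point needs care. The reciprocal manipulations divide by power series such as $K$, which may start in positive degree. I will work in $\Z((q))$, where these series are invertible, so the identity holds there. The final relation is polynomial in $H$, and $1+\qint{p}G$ is invertible in $\Z[[q]]$ because $G$ has no constant term. Hence $H$ is the unique power-series solution of \eqref{eq:block-short}, which is the form needed later.

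The only real risk is bookkeeping of the powers of $q$ in the geometric sum; I will confirm it with the $p=2$ check above.
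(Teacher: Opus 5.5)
Your proposal is correct and is essentially the paper's argument. The paper also computes $R_{m+2}(W)$ from Lemma~\ref{LittleL}, iterates $R_2$ via the closed form $R_2^{p-1}(Y)=q^{p-1}Y/(1+q^2\qint{p-1}Y)$ (which your affine recursion $1/R_2(Z)=q^{-1}/Z+q$ proves explicitly), substitutes, and clears denominators; your bookkeeping of the powers of $q$ and your remark that $1+\qint{p}G_m(W)$ is a unit in $\Z[[q]]$ are both fine.
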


\begin{proof}
A direct induction gives
\[
R_2^{p-1}(Y)=\frac{q^{p-1}Y}{1+q^2\qint{p-1}Y}.
\]
Substitute
\[
Y=R_{m+2}(W)=
\frac{\qint{m}+q^{m+1}W}
     {1+q\qint{m}+q^{m+2}W}.
\]
A direct computation then gives~\eqref{eq:block}; equation~\eqref{eq:block-short} is the same
identity rewritten.
\end{proof}

%%%%%%%%%%%%%%%%%%%%%%%%%%
\subsection{A direct tree model for one block}
%%%%%%%%%%%%%%%%%%%%%%%%%%
The main ingredient of the proof of Theorem~\ref{DomThm} is the following statement.

\begin{lemma}
\label{thm:block-closure}
Let $p\ge2$ and $m\ge1$.  If $W$ has a signed golden model, then
$R_2^{p-1}R_{m+2}(W)$
has a signed golden model.
\end{lemma}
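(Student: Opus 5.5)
The plan is to imitate the first proof of Lemma~\ref{lem:safe}, starting from the block equation~\eqref{eq:block-short}. Put $G=G_m(W)=q\qint{m}+q^{m+2}W$. Then~\eqref{eq:block-short} gives
\[
H=\frac{q^{p-2}G}{1+\qint{p}G}
=\sum_{r\ge0}\bigl(-\qint{p}G\bigr)^r q^{p-2}G ,
\]
which is the unique solution in $\mathbb Z[[q]]$. Without cancellation this expansion overcounts, because every term of $-\qint{p}G$ is negative and the purely unary part $-q\qint{p}\qint{m}$ has degree as low as~$1$. So, as in Lemma~\ref{lem:safe}, I first separate the unary part from the $W$-part:
\[
H=\frac{1}{1+q\qint{p}\qint{m}}\Bigl(q^{p-1}\qint{m}+q^{p+m}W-q^{m+2}\qint{p}WH\Bigr).
\]
Each occurrence of $W$ is now weighted by at least $q^{m+2}\ge q^3$ in the recursive term and by $q^{p+m}\ge q^3$ in the final term. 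The branching $b$-symbol of $\D$ costs $3+\deg T$, so this leaves room to use $b$ as the delimiter for the recursive blocks and $v$ as the delimiter for the final $W$-block.

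Step one is to understand $1/(1+q\qint{p}\qint{m})$ and $q^{p-1}\qint{m}/(1+q\qint{p}\qint{m})$ as signed unary words, exactly as in the alternative proof of Lemma~\ref{lem:safe}. I would read a word in the letters $j$ coming from $q\qint{p}\qint{m}$, run a degree-preserving sign-reversing local involution (split or merge at the first discrepancy with a fixed periodic normal word), and aim to show that the fixed points are few and have distinct lengths modulo some period. They would then be encoded by strings $u^\ell$ with $\ell$ in a small set of residue classes. If the fixed points cannot be separated by length alone, I would allow the symbol $v$ inside unary strings, since $u$ and $v$ give two independent unary atoms of degrees $1$ and $2$.

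Step two encodes the remaining pieces:
\begin{itemize}
\item A recursive factor $q^{m+2+i}W$ with $0\le i\le p-1$ becomes the single spine symbol $b\bigl(u^{m-1+i}I_W(Y)\bigr)$, of degree cost $m+2+i+\deg Y$.
\item The final factor $q^{p+m}W$ becomes $v\,u^{p+m-2}I_W(Y)$.
\item The terminal unary pieces are $u$-strings.
\end{itemize}
Injectivity then follows as in Lemma~\ref{lem:safe}. By Lemma~\ref{lem:right-spine} the spine is parsed uniquely. Each $b$-symbol's left subtree determines both $i$ and $Y$, since its $u$-prefix length is $m-1+i$ and $I_W$ is injective. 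The first main-spine $v$ not belonging to a unary normal form marks the final $W$-block. Finiteness of graded pieces holds because every block has positive degree.

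The main obstacle is step one. The polynomial $1+q\qint{p}\qint{m}$ is no longer a cyclotomic-type quotient like $1+q+\cdots+q^m$, so its inverse need not have coefficients in $\{0,\pm1\}$ in a pattern periodic enough to be separated by $u$-lengths alone. If that fails, my fallback is to keep the factorization $R_2^{p-1}(Y)=q^{p-1}Y/(1+q^2\qint{p-1}Y)$ with $Y=R_{m+2}(W)$ modelled by Lemma~\ref{lem:safe}. I would then encode a factor $q^{2+i}Y$ with $i\ge1$ as $b(u^{i-1}I_Y(\cdot))$. For the factor with $i=0$, which is one unit too cheap for $b$, I would cancel it against objects of $Y$ whose normal form begins with $u$, using the relation $Y=1-qY-\cdots$ from~\eqref{eq:single-digit-rec}. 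This pairing is what must be checked to be a sign-reversing involution.
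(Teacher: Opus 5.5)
There is a genuine gap, and it is at the step you flag; it cannot be repaired within your decomposition. The algebra is fine. But moving the whole unary part $q\qint{p}\qint{m}H$ to the left forces you to invert $1+q\qint{p}\qint{m}$. For $m\ge2$ this inverse need not even have bounded coefficients. For $p=m=2$,
\[
\frac{1}{1+q+2q^2+q^3}=1-q-q^2+2q^3+q^4-4q^5+7q^7-3q^8-11q^9+\cdots,
\]
because $1+q+2q^2+q^3$ has two complex roots of modulus about $0.755$. If the fixed points are encoded by $u$-strings, there is at most one object per degree. So no involution whose fixed points have ``distinct lengths modulo some period'' can exist, and the terminal quotient $q^{p-1}\qint{m}/(1+q\qint{p}\qint{m})$ has the same defect. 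Letting $v$ into the unary strings might leave enough room numerically, but you give no normal form, and $v$ then no longer marks the final $W$-block.

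Your Step two also fails independently. The entry $b\bigl(u^{m-1+i}I_W(Y)\bigr)$ does not determine $(i,Y)$, because $I_W(Y)$ may itself begin with $u$. For the model of $W_\varphi$ with underlying set $\D$ and identity injection, $(i,uE)$ and $(i+1,E)$ give the same tree. The fallback through $R_2^{p-1}(Y)$ has the same left-subtree collision for $i\ge1$. Its $i=0$ cancellation, which is the real content, is left unchecked.

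The missing idea is to transfer only the degree-one part. Write $G_m(W)=q+\Lambda_m(q)$ with $\Lambda_m=q^2+\cdots+q^m+q^{m+2}W$, and move only $(q+\cdots+q^p)H$ to the left. The divisor is then $1+q+\cdots+q^p=(1-q^{p+1})/(1-q)$. The quotients $q^{p-1}/(1+\cdots+q^p)$, $q^{p-2}/(1+\cdots+q^p)$ and $\qint{p}/(1+\cdots+q^p)$ all have coefficients in $\{0,\pm1\}$ supported on two residue classes modulo $p+1$, so $u$-strings suffice.

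The leftover monomials $q^2,\ldots,q^m$ are never inverted. They become positive spine entries $v$ and $b(u^{j-3}E)$, alongside $b(u^{m-1}I_W(Y))$ for $q^{m+2}W$; left-subtree degree $\le m-3$ versus $\ge m-1$ keeps these apart. The factor $\qint{p}$ is absorbed into the $u$-string on the spine before each entry, not into its left subtree, which removes your collision. The two endings are told apart by whether the word ends in a $u$-string or in an entry of $\Lambda_m$ followed by $E$.
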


\begin{proof}
Let $I_W:\mathcal W\hookrightarrow\D$ be a signed golden model for~$W$.
Consider the series 
$$
G_m(W)=q\qint{m}+q^{m+2}W
$$ 
satisfying~\eqref{eq:block-short},
and separate its first term:
\[
G_m(W)=q+\Lambda_m(q),
\qquad
\Lambda_m(q)=q^2+q^3+\cdots+q^m+q^{m+2}W,
\]
where the sum $q^2+\cdots+q^m$ is empty when $m=1$.

We first describe explicitly how a term of $\Lambda_m(q)$ is inserted into a
right-spine word.  Let $\mathcal L_m(W)$ be the following signed family of
single non-$u$ right-spine entries:
\begin{itemize}[leftmargin=2em]
\item when $m\ge2$, the entry
\[
L=v,
\]
of degree $2$ and sign $+1$, corresponding to $q^2$;
\item for every $j$ with $3\le j\le m$, the entry
\[
L_j=b(u^{j-3}E),
\]
of degree $j$ and sign $+1$, corresponding to $q^j$;
\item for every $Y\in\mathcal W$, the entry
\[
L_Y=b\bigl(u^{m-1}I_W(Y)\bigr),
\]
of degree $m+2+\deg Y$ and sign $\sgn(Y)$, corresponding to
$q^{m+2}W$.
\end{itemize}
Thus the signed generating function of $\mathcal L_m(W)$ is
$\Lambda_m(q)$.

All entries in $\mathcal L_m(W)$ are distinct.  The entry $v$, when it is
present, is different from every entry beginning with $b$.  Among the fixed entries $L_j$, the left
subtree is the unary chain $u^{j-3}E$, whose length determines~$j$.
For an entry $L_Y$, the left subtree has the form
$u^{m-1}I_W(Y)$ and has degree at least $m-1$, whereas the left subtree of
every fixed entry $L_j$ has degree at most $m-3$.  Finally, two entries
$L_Y$ and $L_{Y'}$ are equal only if $Y=Y'$, by the injectivity of~$I_W$.

Substituting $G_m(W)=q+\Lambda_m(q)$ into~\eqref{eq:block-short} and moving
the term $(q+\cdots+q^p)H$ to the left gives
\begin{equation}
\label{eq:block-divided}
H=
\frac{q^{p-1}}{1+q+\cdots+q^p}
+
\frac{q^{p-2}\Lambda_m(q)}{1+q+\cdots+q^p}
-
\frac{\qint{p}\Lambda_m(q)}{1+q+\cdots+q^p}\,H.
\end{equation}
We shall use the three expansions
\begin{align}
\frac{q^{p-1}}{1+q+\cdots+q^p}
 &=\sum_{s\ge0}
 \bigl(q^{p-1+s(p+1)}-q^{p+s(p+1)}\bigr),
\label{eq:block-unary-a}\\
\frac{q^{p-2}}{1+q+\cdots+q^p}
 &=\sum_{s\ge0}
 \bigl(q^{p-2+s(p+1)}-q^{p-1+s(p+1)}\bigr),
\label{eq:block-unary-c}\\
\frac{\qint{p}}{1+q+\cdots+q^p}
 &=\sum_{s\ge0}
 \bigl(q^{s(p+1)}-q^{p+s(p+1)}\bigr).
\label{eq:block-unary-d}
\end{align}

We now define the signed trees representing~$H$.  Choose an integer $r\ge0$.
For each $i=1,\ldots,r$, choose an integer $s_i\ge0$, an entry
$L_i\in\mathcal L_m(W)$, and one of the two unary strings
\[
U_i=u^{s_i(p+1)}
\quad\hbox{with sign }+1,
\qquad\text{or}\qquad
U_i=u^{p+s_i(p+1)}
\quad\hbox{with sign }-1.
\]
Begin the right-spine word with
\[
U_1L_1U_2L_2\cdots U_rL_r.
\]
After this prefix, finish the word in exactly one of the following two ways:
\begin{enumerate}[label=\textup{(\roman*)},leftmargin=2.8em]
\item append
\[
u^{p-1+s(p+1)}E
\quad\hbox{with sign }+1,
\qquad\text{or}\qquad
u^{p+s(p+1)}E
\quad\hbox{with sign }-1,
\]
for some $s\ge0$;
\item choose one more entry $L\in\mathcal L_m(W)$ and append
\[
u^{p-2+s(p+1)}LE
\quad\hbox{with unary sign }+1,
\qquad\text{or}\qquad
u^{p-1+s(p+1)}LE
\quad\hbox{with unary sign }-1,
\]
for some $s\ge0$.  In this case the sign of the appended part is the product
of the displayed unary sign and the sign of~$L$.
\end{enumerate}
The sign of the whole object is the product of the signs of all the $U_i$,
all the $L_i$, and the final part, multiplied by $(-1)^r$.

Indeed, by~\eqref{eq:block-unary-d}, each pair $U_iL_i$, together with the
additional factor $-1$, has signed generating function
\[
-\frac{\qint{p}\Lambda_m(q)}{1+q+\cdots+q^p}.
\]
The two possible endings have signed generating functions
\[
\frac{q^{p-1}}{1+q+\cdots+q^p}
\qquad\hbox{and}\qquad
\frac{q^{p-2}\Lambda_m(q)}{1+q+\cdots+q^p},
\]
respectively, by~\eqref{eq:block-unary-a} and
\eqref{eq:block-unary-c}.  Hence the signed generating function of all the
trees just described is
\[
\sum_{r\ge0}
\left(
 -\frac{\qint{p}\Lambda_m(q)}{1+q+\cdots+q^p}
\right)^r
\left(
 \frac{q^{p-1}}{1+q+\cdots+q^p}
 +
 \frac{q^{p-2}\Lambda_m(q)}{1+q+\cdots+q^p}
\right),
\]
which is exactly the solution of~\eqref{eq:block-divided}.

It remains to check that two different choices cannot produce the same tree.
By Lemma~\ref{lem:right-spine}, the complete sequence of $u$, $v$, and
$b(T)$ entries on the right spine is uniquely recovered from the tree.
The preceding description shows how to recognize each non-$u$ entry in
$\mathcal L_m(W)$ and, in the case of $L_Y$, how to recover~$Y$: remove the
initial chain $u^{m-1}$ from its left subtree and then use the injectivity
of~$I_W$.

The form of the ending is also uniquely determined.  In case~\textup{(i)}
the right-spine word ends with a nonempty string of $u$'s, while in
case~\textup{(ii)} its last entry is an element of $\mathcal L_m(W)$.
Once the ending is known, the number of $u$'s before each non-$u$ entry is
read directly from the right-spine word.  In a repeated pair its length is
congruent to either $0$ or $p$ modulo $p+1$; in an ending of type~\textup{(i)}
it is congruent to either $p-1$ or $p$; and in an ending of type~\textup{(ii)}
it is congruent to either $p-2$ or $p-1$.  In each position the two residues
are distinct, so the choice and its sign are uniquely recovered.

Note that Definition~\ref{def:signed-model} requires finite graded pieces.  
However both this construction and the one in Lemma~\ref{lem:safe} 
involve infinitely many choices of \(r\) and of the integers \(s_i\).
We thus need to check finiteness in each degree.  
It follows because every repeated block has positive degree, 
the unary lengths are bounded in a fixed degree, and the graded pieces of \(\mathcal W\) are finite.

Thus the construction is signed, degree preserving, and injective, and
therefore gives a signed golden model for~$H$.
\end{proof}

\begin{corollary}
\label{cor:all-blocks}
For every $k\ge0$ and every $n>2$, the block operator
$R_2^kR_n$
preserves signed golden models.
\end{corollary}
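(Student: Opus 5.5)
The corollary splits on the value of $k$ and reduces to the two closure lemmas already proved. Write $n=m+2$ with $m=n-2\ge1$, which is possible since $n>2$.

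For $k=0$ the block operator is the single-digit operator $R_2^0R_n=R_{m+2}$. Lemma~\ref{lem:safe} says exactly that $R_{m+2}$ sends a series with a signed golden model to a series with a signed golden model, so this case needs nothing more.

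For $k\ge1$, set $p=k+1\ge2$. Then $R_2^kR_n=R_2^{p-1}R_{m+2}$. This operator satisfies the block equation~\eqref{eq:block} of Lemma~\ref{lem:block-eq}, and Lemma~\ref{thm:block-closure} applies directly because both hypotheses $p\ge2$ and $m\ge1$ hold.

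The case split is genuinely needed, because Lemma~\ref{thm:block-closure} assumes $p\ge2$. Its proof uses that assumption: the ending of type (ii) involves $u^{p-2+s(p+1)}$, and distinguishing the residues $0,p$ versus $p-1,p$ versus $p-2,p-1$ modulo $p+1$ relies on $p\ge2$. Moreover, for $p=1$ the expansion~\eqref{eq:block-unary-c} would have a negative exponent $q^{-1}$, so the block lemma cannot simply be applied with $k=0$. That is why the $k=0$ case goes to Lemma~\ref{lem:safe} instead.

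No further difficulty arises. Both lemmas were stated for an arbitrary $W$ with a signed golden model and already include the finiteness of the graded pieces, so the corollary is a direct dispatch to them. The only thing to check is that the parametrization by $p$ and $m$ exactly covers all pairs with $k\ge0$ and $n>2$, which it does.
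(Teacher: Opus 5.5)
Your proof is correct and is the paper's argument: $k=0$ is Lemma~\ref{lem:safe} with $m=n-2$, and $k\ge1$ is Lemma~\ref{thm:block-closure} with $p=k+1\ge2$, $m=n-2\ge1$. One small correction to your side remark: modulo $p+1$ the residue pairs $\{0,p\}$, $\{p-1,p\}$, $\{p-2,p-1\}$ stay distinct even for $p=1$, so what actually breaks at $p=1$ is only the negative unary length $p-2=-1$, i.e.\ the $q^{-1}$ in \eqref{eq:block-unary-c}.
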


\begin{proof}
For $k=0$, this is Lemma~\ref{lem:safe}.  For $k\ge1$, put
$p=k+1$ and $m=n-2$, and apply Lemma~\ref{thm:block-closure}.
\end{proof}

%%%%%%%%%%%%%%%%%%%
\subsection{The degree stability and infinite tails}
%%%%%%%%%%%%%%%%%%%

The preceding constructions apply directly to finite strings of blocks.  To
pass to an infinite continued fraction, we use the stabilization of each fixed
coefficient.

\begin{lemma}
\label{lem:lipschitz}
Let $a\ge2$ and $W,W'\in\mathbb Z[[q]]$.  Then
\begin{equation}
\label{DiffEq}
R_a(W)-R_a(W')=
\frac{q^{a-1}(W-W')}
{(1+q\qint{a-2}+q^aW)(1+q\qint{a-2}+q^aW')}.
\end{equation}
\end{lemma}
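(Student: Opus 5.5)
The identity \eqref{DiffEq} is a direct algebraic consequence of Lemma~\ref{LittleL}. The plan is to write the M\"obius-type formula \eqref{eq:Ra} in the form
\[
R_a(W)=\frac{\alpha+\beta W}{\gamma+\delta W},
\qquad
\alpha=\qint{a-2},\quad \beta=q^{a-1},\quad \gamma=\qint{a-1},\quad \delta=q^a,
\]
and then use the standard cross-multiplication identity for linear fractional maps:
\[
\frac{\alpha+\beta W}{\gamma+\delta W}-\frac{\alpha+\beta W'}{\gamma+\delta W'}
=\frac{(\beta\gamma-\alpha\delta)(W-W')}{(\gamma+\delta W)(\gamma+\delta W')}.
\]
This holds in $\Z[[q]]$ because the denominators have constant term $1$ (for $a\ge2$, $\qint{a-1}$ starts with $1$) and are therefore invertible.

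The key computation is the determinant $\beta\gamma-\alpha\delta=q^{a-1}\qint{a-1}-q^a\qint{a-2}$. Since $\qint{a-1}=1+q\qint{a-2}$, this equals
\[
q^{a-1}+q^a\qint{a-2}-q^a\qint{a-2}=q^{a-1}.
\]
This is the $q$-analogue of the matrix having determinant $1$. For $a=2$, where $\qint{0}=0$, it gives $q\cdot 1-0=q$, consistent with the general formula.

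It remains to rewrite the denominators as $\gamma+\delta W=\qint{a-1}+q^aW=1+q\qint{a-2}+q^aW$, using the same identity $\qint{a-1}=1+q\qint{a-2}$. This gives exactly \eqref{DiffEq}.

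There is no real obstacle. The only points to check are that the denominators are units in $\Z[[q]]$, which holds since their constant term is $1$, and the boundary case $a=2$, where $\qint{0}=0$. The intended use is clearly that $W-W'=O(q^d)$ implies $R_a(W)-R_a(W')=O(q^{d+a-1})$, so each digit shifts the agreement order by at least $a-1\ge1$. This yields the stabilization needed to pass from finite block strings to infinite continued fractions.
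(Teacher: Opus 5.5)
Your proof is correct and is exactly the direct computation the paper intends; the paper just says the identity is straightforward from \eqref{eq:Ra}. Your write-up makes the details explicit: the M\"obius determinant $q^{a-1}\qint{a-1}-q^a\qint{a-2}=q^{a-1}$ via $\qint{a-1}=1+q\qint{a-2}$, the invertibility of the denominators in $\Z[[q]]$, and the boundary case $a=2$.
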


\begin{proof}
Straightforward from~\eqref{eq:Ra}. 
\end{proof}

Both denominator factors in~\eqref{DiffEq} are \(q\)-adic units.
It then follows from~\eqref{DiffEq} that
\[
\ord\bigl(R_a(W)-R_a(W')\bigr)
= \ord(W-W')+a-1
\ge \ord(W-W')+1,
\]
provided $W\not=W'$.

Thus every common leading continued-fraction digit raises the order of the
difference of two tail series by at least one.  In particular, changing the
continued fraction sufficiently far to the right cannot change any prescribed
finite set of coefficients.

\medskip
\noindent\textit{Explicit description of $\varepsilon_{x,N}$.}
The constructions above give a direct recursive way to evaluate the function
appearing in the next lemma.  For a finite string of continued-fraction blocks,
start with the empty model for the final tail $(2,2,2,\ldots)$ and prepend the
blocks from right to left.  
Starting from the empty
model, apply the signed golden models constructed in Lemmas~\ref{lem:safe} and
\ref{thm:block-closure}, parse each right-spine word according to the relevant lemma,
and define $\varepsilon_{x,N}(T)$ as the sign of the unique recursively constructed
model object mapped to $T$ (or~$0$ if none is mapped to it).

This description can be read directly from the right spine.  
Suppose that the next block is $2^{(k)}n$, with $n>2$.

If $k=0$, put $m=n-2$, as in Lemma~\ref{lem:safe}.  Before every entry
\[
 L(Y)=b\bigl(u^{m-1}I_W(Y)\bigr)
\]
the length of the preceding $u$-string must be congruent to $0$ or $1$ modulo
$m+1$; these two possibilities have signs $+$ and $-$, respectively.  The word
ends either with a pure $u$-string whose length is congruent to $0$ or $m$
modulo $m+1$, again with signs $+$ and $-$, or with
\[
 M(Y)=vu^{m-1}I_W(Y)
\]
preceded by a $u$-string of length congruent to $0$ or $1$ modulo $m+1$, with
the same two signs.  Each entry $L(Y)$ contributes the sign of $Y$ and one
additional minus sign, while $M(Y)$ contributes the sign of~$Y$.  The objects
$Y$ are then parsed recursively using the remaining continued-fraction
blocks.

If $k\ge1$, put $p=k+1$ and $m=n-2$, as in
Lemma~\ref{thm:block-closure}.  Every non-$u$ entry must belong to
$\mathcal L_m(W)$.  Before each repeated entry the length of the $u$-string is
congruent to $0$ or $p$ modulo $p+1$, with signs $+$ and $-$, respectively.
The ending is of one of the following two forms.  It either consists only of a
$u$-string of length congruent to $p-1$ or $p$ modulo $p+1$, with signs $+$ and
$-$, or it ends in an entry of $\mathcal L_m(W)$ preceded by a $u$-string of
length congruent to $p-2$ or $p-1$ modulo $p+1$, again with signs $+$ and $-$.
The sign of the tree is the product of all these unary signs and the signs of
the entries of $\mathcal L_m(W)$, together with one additional minus sign for
each repeated entry.  In an entry
\[
 L_Y=b\bigl(u^{m-1}I_W(Y)\bigr),
\]
the sign is obtained by recursively parsing~$Y$; the fixed entries $v$ and
$b(u^{j-3}E)$ have positive sign.  If at some stage the right-spine word does
not have one of the required forms, then $\varepsilon_{x,N}(T)=0$.

For a fixed degree, choose a truncation long enough that
Lemma~\ref{lem:lipschitz} makes all later digits irrelevant to the coefficient
in question, and apply the preceding parser to the resulting finite model.
This produces an admissible function $\varepsilon_{x,N}$; it may depend on the
chosen sufficiently long truncation, but its signed sum does not.  The next
lemma records the resulting existence statement.

\medskip
\noindent\textit{Example.}
Consider again
\[
 x_4=\NCF{2,4,3,3,3,\ldots}.
\]
For the input $W_\varphi$, use the signed golden model whose underlying set is
$\D$, whose sign on a tree $T$ is $(-1)^{\deg T}$, and whose injection is the
identity; this is possible because $W_\varphi(q)=D(-q)$.  The first block is the
single digit $4$, so $m=2$.  The normal-form rule gives
\[
 \varepsilon_{x_4,2}(E)=1,
 \qquad
 \varepsilon_{x_4,3}(uE)=0.
\]
The second equality is the normal-form version of the cancellation
$\tau_1-N_1(\tau_0)=0$ shown in Figure~\ref{fig:digit-four-low-degrees}.  In
degree~$2$, the only surviving golden tree is $u^2E$, with negative sign:
\[
 \varepsilon_{x_4,4}(u^2E)=-1,
 \qquad
 \varepsilon_{x_4,4}(vE)=0.
\]
In degree~$3$, the two surviving trees are $u^3E$ and $vuE$, both with positive
sign, while the other two trees do not occur:
\[
 \varepsilon_{x_4,5}(u^3E)=\varepsilon_{x_4,5}(vuE)=1,
 \qquad
 \varepsilon_{x_4,5}(uvE)=\varepsilon_{x_4,5}(b(E)E)=0.
\]
Their sums are respectively $1,0,-1,2$, in agreement with
\[
 W_{x_4}=1-q^2+2q^3-3q^4+\cdots
\]
and hence with the coefficients of $[x_4]_q=1+q^2W_{x_4}$.

\begin{lemma}
\label{thm:signed-interpretation}
For every real number $x\in(1,2)$ and every $N\ge2$, there exists a function
\[
 \varepsilon_{x,N}:\D_{N-2}\longrightarrow\{0,\pm1\}
\]
that counts the corresponding coefficient of $[x]_q$, namely
\[
 \coeff{N}{[x]_q}
 =
 \sum_{T\in\D_{N-2}}\varepsilon_{x,N}(T).
\]
\end{lemma}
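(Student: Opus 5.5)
The plan is to reduce the statement to the existence of a signed golden model for the tail series $W_x$, and then build that model from a finite truncation of the continued fraction. Since $[x]_q=1+q^2W_x$, we have $\coeff{N}{[x]_q}=\coeff{N-2}{W_x}$ for $N\ge2$. Suppose we have a signed golden model $(\mathcal A,I)$ for some series $A$ with $\coeff{N-2}{A}=\coeff{N-2}{W_x}$. Then we set $\varepsilon_{x,N}(T)=\sgn(X)$ if $T=I(X)$ with $X\in\mathcal A_{N-2}$, and $\varepsilon_{x,N}(T)=0$ otherwise. Injectivity of $I$ makes this well defined with values in $\{0,\pm1\}$, and degree preservation gives $\sum_{T\in\D_{N-2}}\varepsilon_{x,N}(T)=\sum_{X\in\mathcal A_{N-2}}\sgn(X)=\coeff{N-2}{A}$. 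So it suffices, for each fixed $N$, to produce such an $A$.

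Write $x=\NCF{2,a_1,a_2,\ldots}$, using the standard expansion eventually equal to $2$ when $x$ is rational.

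\textbf{Case 1: some digit $a_i>2$ occurs arbitrarily far out.} By the Hirzebruch formula~\eqref{HZRegEqt}, the sequence $a_1,a_2,\ldots$ then decomposes into consecutive blocks $2^{(k)}n$ with $n>2$. For a given $N$, choose $K$ so large that the first $K$ blocks contain at least $N-1$ digits. Let $x'$ be the number obtained by keeping these $K$ blocks and then appending $2,2,2,\ldots$. We have $x'=\NCF{2,2,2,\ldots}=1$ at the innermost level, corresponding to $[1]_q=1$ and hence tail $W=0$. The zero series has the empty signed golden model. Applying Corollary~\ref{cor:all-blocks} $K$ times, prepending blocks from right to left, gives a signed golden model for $W_{x'}$.

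\textbf{Comparing $W_{x'}$ with $W_x$.} Both tails are obtained by applying the same $N-1$ (or more) leading maps $R_{a_i}$ to some pair of series. By Lemma~\ref{lem:lipschitz}, each application raises the $q$-adic order of the difference by at least one. Hence $\ord(W_x-W_{x'})\ge N-1$, so $\coeff{N-2}{W_x}=\coeff{N-2}{W_{x'}}$.

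\textbf{Case 2: the expansion is eventually $2,2,\ldots$.} This covers rational $x$ and, in particular, $x$ whose first digits after the leading $2$ already end in $2$'s. The blocks structure still applies to the finite part preceding the terminal run of $2$'s, since that part ends with a digit greater than $2$. The terminal run itself is the empty model with $W=0$. So the same construction applies directly, with no truncation needed.

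\textbf{Main obstacle.} The step needing care is justifying that the truncated tail really lies in the domain where the block decomposition and the starting value $W=0$ are legitimate. Concretely:
\begin{itemize}
\item the infinite string of $2$'s corresponds to $x=1$, with $W_1=([1]_q-1)/q^2=0$;
\item the order estimate of Lemma~\ref{lem:lipschitz} requires its denominators to be $q$-adic units, which holds for all power series arguments, so comparing $W_x$ (for $x$ irrational) with the rational truncation is legitimate;
\item the number of digits prepended must exceed $N-2$ by enough to absorb the shift between $[x]_q$ and $W_x$.
\end{itemize}
Everything else is bookkeeping already supplied by Lemmas~\ref{lem:safe} and~\ref{thm:block-closure}.
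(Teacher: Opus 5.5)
Your proposal is correct and follows essentially the same route as the paper: split the tail into blocks $2^{(k)}n$, start from the empty model for the all-$2$ tail ($W=0$), and apply Corollary~\ref{cor:all-blocks} block by block; then use Lemma~\ref{lem:lipschitz} to see that a truncation sharing at least $N-1$ leading digits has the same coefficient of $q^{N-2}$ as $W_x$, and read off $\varepsilon_{x,N}$ from the injection into $\D_{N-2}$. The appeal to the Hirzebruch formula is unnecessary, since any digit string with infinitely many entries $>2$ splits into such blocks trivially, but it does no harm.
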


\begin{proof}
Write
\[
x=\NCF{2,a_1,a_2,a_3,\ldots},
\qquad a_i\ge2.
\]

Group the tail into blocks
\[
(a_1,a_2,\ldots)
=
2^{(k_1)}n_1\,2^{(k_2)}n_2\,2^{(k_3)}n_3\cdots,
\qquad n_i>2.
\]
If this decomposition is finite, the remaining tail is $(2,2,2,\ldots)$,
whose series is $W=0$.  Starting from the empty signed model for~$0$ and
prepending the blocks from right to left, Corollary~\ref{cor:all-blocks}
gives a signed golden model for~$W_x$.

Suppose now that there are infinitely many blocks.  Keep the first $M$ blocks
and replace the rest by $(2,2,2,\ldots)$; denote the resulting tail series by
$W^{(M)}$.  Each $W^{(M)}$ has a signed golden model by the finite construction
above.  The truncated and original continued fractions have at least $M$
common leading digits, and Lemma~\ref{lem:lipschitz} therefore implies that,
for every fixed $n$, the coefficients $[q^n]W^{(M)}$ and $[q^n]W_x$ agree once
$M$ is sufficiently large.

Choose such an $M$ for $n=N-2$.  The signed golden model for $W^{(M)}$ gives a
degree-preserving injection into~$\D$.  Define $\varepsilon_{x,N}(T)$ to be
the sign of the unique model object mapped to~$T$, and $0$ when no object is
mapped to~$T$.  Since $[x]_q=1+q^2W_x$, the required identity follows.
\end{proof}

\begin{remark}
{\rm
Note that Lemma~\ref{thm:signed-interpretation} constructs the model degree by degree, 
potentially using a different truncation \(M\) for each degree.  
The constructed signed golden model attached to \(W_x\)
uses a suitable truncation separately in each degree 
and takes the disjoint union of the resulting finite graded pieces.
}
\end{remark}

\begin{proof}[Proof of Theorem~\ref{DomThm}]
The coefficients of degrees $0$ and $1$ are respectively $1$ and $0$ for every
$x\in(1,2)$.  For $N\ge2$, Lemma~\ref{thm:signed-interpretation} and
Proposition~\ref{GolTreeProp} give
\[
\left|\coeff{N}{[x]_q}\right|
\le |\D_{N-2}|
=[q^{N-2}]D(q).
\]
Since $D(q)=W_\varphi(-q)$ and
$[\varphi]_q=1+q^2W_\varphi(q)$, we have
\[
[q^{N-2}]D(q)
=
\left|\coeff{N}{[\varphi]_q}\right|.
\]
This proves the coefficientwise domination.
\end{proof}

\begin{proof}[Proof of Theorem~\ref{RadThm}]
Theorem~\ref{DomThm} and the Cauchy--Hadamard formula imply that every
$[x]_q$ with $x\in(1,2)$ has radius of convergence at least
\[
R_\varphi=\frac{3-\sqrt5}{2}.
\]
For an arbitrary $x>1$, an integer translation places $x$ in the interval
$[1,2]$.  The relation
\[
[x+r]_q=q^r[x]_q+\qint{r}
\]
changes a series only by multiplication by a monomial and addition of a
polynomial, and therefore does not change its radius of convergence.  The
integer endpoints are polynomial cases.  The result follows.

To cover the case \(0<x<1\), we use \([x]_q=\frac{[x+1]_q-1}{q}\); 
the numerator is divisible by \(q\) because \([x+1]_q\) has constant term \(1\), 
and this operation does not change the radius.
The result follows.
\end{proof}

%%%%%%%%%%%%%%%%%%%
%%%%%%%%%%%%%%%%%%%
\section{An example of the silver ratio}\label{SSilver}
%%%%%%%%%%%%%%%%%%%
%%%%%%%%%%%%%%%%%%%

In this section, we give a detailed treatment of the case of the ``silver ratio''.
We work with the normalized representative \(\sqrt2\in(1,2)\) of the silver ratio \(1+\sqrt2\), 
since integer translation does not affect the convergence radius.
Our construction simplifies the preceding constructions.
Note also that this simplified construction can be generalized for arbitrary
metallic numbers 
(a term often used for the simple quadratic irrationals
$\varphi_n=\frac{n+\sqrt{n^2+4}}{2}$; see, e.g.,~\cite{Ren,Pedon}), 
but we will not dwell on it here.

%%%%%%%%%%%%%%%%%%%
\subsection{The silver ratio}
%%%%%%%%%%%%%%%%%%%

A simple quadratic irrational is 
$$
\sqrt{2}=\NCF{2,\overline{2,4}}
 =\NCF{2,2,4,2,4,2,4,\ldots},
 $$
the corresponding $q$-deformation starts as follows:
$$
\begin{array}{rcl}
\left[\sqrt{2}\right]_q&=&
1+ q^3 - 2q^5  + q^6+ 4q^7- 5q^8- 7q^9+18q^{10} + 7q^{11}- 55q^{12}+ 18q^{13}\\[4pt]
&&+ 146q^{14} - 155q^{15}- 322q^{16}+ 692q^{17}+ 476q^{18}
 - 2446q^{19}+ 307q^{20}+\cdots
 \end{array}
 $$
The explicit generating function is
$$
\left[\sqrt{2}\right]_q=
\frac{q^3-1+\sqrt{q^6+4q^4-2q^3+4q^2+1}}{2q^2},
$$
where the square-root branch is the formal branch with constant term \(1\).
This example was studied in~\cite{MGO-reals,OP}.
Note that the coefficients of $\left[\sqrt{2}\right]_q$ form OEIS sequence A337589~\cite{OEISA337589}.

%%%%%%%%%%%%%%%%%%%
\subsection{The silver forest}
%%%%%%%%%%%%%%%%%%%

The tail series $S:=W_{\sqrt2}$ is a fixed point of the block operator
corresponding to $(2,4)$, so we have
$S=R_2R_4(S).$
Specializing~\eqref{eq:block} gives
$$
 S=q+q^2+q^4S-(q+2q^2+q^3)S-(q^4+q^5)S^2, 
$$
which can be simplified to
\begin{equation}
\label{eq:silver-fixed}
 S=q-2q^2S+q^3S-q^4S^2.
\end{equation}
Indeed, 
\[
\begin{aligned}
&S-\bigl(q+q^2+q^4S-(q+2q^2+q^3)S-(q^4+q^5)S^2\bigr)\\
&\hspace{35mm}=(1+q)\bigl(S-q+2q^2S-q^3S+q^4S^2\bigr).
\end{aligned}
\]
Since $1+q$ is a unit in $\Z[[q]]$, cancellation gives~\eqref{eq:silver-fixed}.

We now describe explicitly the signed class $\mathcal S$ of trees represented by~$S$,
according to the construction from Section~\ref{TreeGen}.
\begin{itemize}
    \item A terminal object, ``leaf'' $e$ of degree $1$ and positive sign:
    \[
    \deg(e)=1,
    \qquad
    \sgn(e)=+1.
    \]

    \item Two distinct unary constructors $U_1,U_2$, both of degree cost $2$ and both
    reversing the sign:
    \[
    \deg U_i(T)=2+\deg T,
    \qquad
    \sgn U_i(T)=-\sgn T,
    \qquad i=1,2.
    \]

    \item One unary constructor $V$ of degree cost $3$, preserving the sign:
    \[
    \deg V(T)=3+\deg T,
    \qquad
    \sgn V(T)=\sgn T.
    \]

    \item One ordered binary constructor $B$ of degree cost $4$, with an additional
    negative sign:
    \[
    \deg B(T_1,T_2)=4+\deg T_1+\deg T_2,
    \]
    \[
    \sgn B(T_1,T_2)
      =-\sgn(T_1)\sgn(T_2).
    \]
\end{itemize}

Equivalently, at the level of signed combinatorial classes,
\[
\sil
 =\{e\}
 \sqcup U_1(\sil)
 \sqcup U_2(\sil)
 \sqcup V(\sil)
 \sqcup B(\sil,\sil).
\]

\begin{figure}[ht]
\centering
\begin{tikzpicture}[scale=0.92]
  % leaf
  \node[treenode] (l) at (0,0) {};
  \node at (0,-0.55) {$e$};
  \node[lab] at (0,-0.95) {$+q$};
  % unary negative
  \node[treenode] (ur) at (2,0.3) {};
  \node[treenode] (uc) at (2,-0.5) {};
  \draw (ur)--node[right,lab]{$2$}(uc);
  \node at (2,-1.0) {$U_j(T)$};
  \node[lab] at (2,-1.4) {$-q^2T$};
  % V positive
  \node[treenode] (vr) at (4.4,0.3) {};
  \node[treenode] (vc) at (4.4,-0.5) {};
  \draw (vr)--node[right,lab]{$3$}(vc);
  \node at (4.4,-1.0) {$V(T)$};
  \node[lab] at (4.4,-1.4) {$+q^{3}T$};
  % binary negative
  \node[treenode] (br) at (7,0.3) {};
  \node[treenode] (bl) at (6.55,-0.5) {};
  \node[treenode] (brr) at (7.45,-0.5) {};
  \draw (br)--(bl); \draw (br)--(brr);
    \node at (7,-0.2) {$\scriptstyle4$};
  \node at (7,-1.0) {$B(T_1,T_2)$};
  \node[lab] at (7,-1.4) {$-q^{4}T_1T_2$};
\end{tikzpicture}
\caption{The signed constructors for $\mathcal S$.  
The signs are the signs of the
corresponding terms in \eqref{eq:silver-fixed}.}
\label{fig:metallic-constructors}
\end{figure}

%%%%%%%%%%%%%%%%%%%
\subsection{An explicit signed degree-preserving injection into \texorpdfstring{$\D$}{D}}
%%%%%%%%%%%%%%%%%%%

Let us describe the injection of the family $\mathcal S$ into the golden family $\D$
\[
\Isil:\sil\hookrightarrow\gold.
\]
Recall that we use the right-spine notation
\[
uR:=P_1(R),
\qquad
vR:=P_2(R),
\qquad
b(T)R:=Q(T,R),
\]
for $R,T\in\gold$.

The injection is defined recursively by
\[
\begin{aligned}
\Isil(e)&=uE,\\
\Isil\bigl(U_1(T)\bigr)&=u^2\Isil(T),\\
\Isil\bigl(U_2(T)\bigr)&=v\Isil(T),\\
\Isil\bigl(V(T)\bigr)&=uv\Isil(T),\\
\Isil\bigl(B(T_1,T_2)\bigr)
   &=b\bigl(u\Isil(T_1)\bigr)\Isil(T_2).
\end{aligned}
\]
Equivalently, the last formula is
\[
\Isil\bigl(B(T_1,T_2)\bigr)
 =Q\bigl(P_1(\Isil(T_1)),\Isil(T_2)\bigr).
\]
It is easy to prove injectivity.
The initial right-spine patterns distinguish the constructors: 
\(b\) detects \(B\), \(v\) detects \(U_2\), \(uv\) detects \(V\), \(u^2\) detects \(U_1\), 
and the exceptional word \(uE\) is the leaf \(e\).

Figure~\ref{fig:silver-injection} provides simple examples of the defined injection.
\begin{figure}[ht]
\centering
\begin{tikzpicture}[scale=0.88]
  % L image
  \node at (0,0.8) {$e$};
  \draw[->] (0.4,0.8)--(1.0,0.8);
  \node[treenode] (a0) at (1.6,1.1) {};
  \node[treenode] (a1) at (1.6,0.35) {};
  \draw (a0)--node[right,lab]{$1$}(a1);
  \node at (1.6,-0.25) {$P_1(E)$};

  % U1 image
  \node at (4.0,0.8) {$U_2(e)$};
  \draw[->] (4.8,0.8)--(5.4,0.8);
  \node[treenode] (b0) at (6.0,1.45) {};
  \node[treenode] (b1) at (6.0,0.75) {};
  \node[treenode] (b2) at (6.0,0.05) {};
  \draw (b0)--node[right,lab]{$2$}(b1);
  \draw (b1)--node[right,lab]{$1$}(b2);
  \node at (6.0,-0.55) {$P_2(P_1(E))$};

  % U2 image
  \node at (0,-2.0) {$U_1(e)$};
  \draw[->] (0.9,-2.0)--(1.5,-2.0);
  \node[treenode] (c0) at (2.2,-1.25) {};
  \node[treenode] (c1) at (2.2,-1.9) {};
  \node[treenode] (c2) at (2.2,-2.55) {};
  \node[treenode] (c3) at (2.2,-3.2) {};
  \draw (c0)--node[right,lab]{$1$}(c1);
  \draw (c1)--node[right,lab]{$1$}(c2);
  \draw (c2)--node[right,lab]{$1$}(c3);
  \node at (2.2,-3.75) {$P_1(P_1(P_1(E)))$};

  % binary image
  \node at (5.0,-2.0) {$B(e,e)$};
  \draw[->] (5.7,-2.0)--(6.3,-2.0);
  \node[treenode] (d0) at (7.1,-1.4) {};
  \node[treenode] (d1) at (6.55,-2.05) {};
  \node[treenode] (d2) at (7.65,-2.05) {};
  \node[treenode] (d3) at (6.55,-2.7) {};
  \node[treenode] (d4) at (7.65,-2.7) {};
  \node[treenode] (d5) at (6.55,-3.35) {};
  \draw (d0)--(d1); \draw (d0)--(d2);
  \draw (d1)--node[left,lab]{$1$}(d3);
  \draw (d2)--node[right,lab]{$1$}(d4);
  \draw (d3)--node[right,lab]{$1$}(d5);
  \node at (7.1,-3.95) {$Q\bigl(P_1(P_1(E)),P_1(E)\bigr)$};
\end{tikzpicture}
\caption{Four examples of the silver injection $\Isil$.}
\label{fig:silver-injection}
\end{figure}

\begin{remark}
{\rm
The general block construction described in Section~\ref{ProofSec}
gives another and more complicated signed golden model for $S$, obtained from the divided
equation~\eqref{eq:block-divided}.  
This alternative model contains additional cancelling pairs and admits a normal-form
description in which unary lengths are controlled modulo $3$.  
For the silver ratio, however, the
simplified quadratic equation yields the more economical direct model described here.
}
\end{remark}

%%%%%%%%%%%%%%%%%%%
%%%%%%%%%%%%%%%%%%%
\section{Coefficient functions \texorpdfstring{$\varkappa_N$ on $(1,2)$}{kappa N on (1,2)}}
\label{LastSec}
%%%%%%%%%%%%%%%%%%%
%%%%%%%%%%%%%%%%%%%

In the final section, we will discuss the qualitative behavior of each fixed
coefficient of the series~\eqref{TayEq} as a function of~$x$.
We restrict the considerations to the interval $(1,2)$.
Note that the first recurrence in~\eqref{SLEq} reads in terms of the coefficients
$$
\varkappa_N(x+1)=\varkappa_{N-1}(x),
\qquad
N\geq1,
$$
and thus connects the coefficients for different unit intervals.

%%%%%%%%%%%%%%%%%%%
\subsection{Coefficients are step functions}
%%%%%%%%%%%%%%%%%%%
The next proposition is the first step toward showing that $\varkappa_N$ is a step function.

\begin{proposition}
\label{LoConetProp}
For every $N$, the function
\[
 x\longmapsto\varkappa_N(x)=\coeff{N}{[x]_q}
\]
is locally constant at every irrational point of $(1,2)$.
\end{proposition}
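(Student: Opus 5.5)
Fix $N$ and an irrational $x_0\in(1,2)$, written as $x_0=\NCF{2,a_1,a_2,\ldots}$ with all $a_i\ge2$. Since $x_0$ is irrational, this expansion is infinite and not eventually equal to $2$. The plan is to find a neighbourhood of $x_0$ in which every $x$ has the same first $M$ tail digits $a_1,\ldots,a_M$. Here $M$ is chosen so large that these digits alone determine $\varkappa_N$. Lemma~\ref{lem:lipschitz} already gives the second half of this.

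\textbf{Step 1 (stabilization).} Let $x,x'\in(1,2)$ share the leading digits $2,a_1,\ldots,a_M$. Write $W_x=R_{a_1}\cdots R_{a_M}(W_y)$ and $W_{x'}=R_{a_1}\cdots R_{a_M}(W_{y'})$ for the corresponding tails $y,y'$. Iterating the order estimate after Lemma~\ref{lem:lipschitz} $M$ times gives
\[
\ord(W_x-W_{x'})\ge M+\ord(W_y-W_{y'})\ge M .
\]
Since $[x]_q=1+q^2W_x$, taking $M\ge N-1$ yields $\varkappa_N(x)=\varkappa_N(x')$. The same bound holds if one of the tails is $(2,2,\ldots)$, for instance for rational $x$ written in the standard infinite expansion. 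Here I use that each $W_y$ is a genuine power series, which holds because every tail $y$ again lies in $(1,2)$ after prepending a $2$.

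\textbf{Step 2 (cylinder sets are neighbourhoods).} Show that the set $C_M$ of $x\in(1,2)$ whose negative continued fraction begins with $2,a_1,\ldots,a_M$ contains an open interval around $x_0$. The map $t\mapsto a-1/t$ is an increasing Möbius transformation. So $C_M$ is the image of the tail interval $[1,\infty]$ under the composition $g_M$ of the maps $t\mapsto a_i-1/t$, which is a continuous strictly increasing map. Thus $C_M$ is an interval whose endpoints are the rational numbers $g_M(1)$ and $g_M(\infty)$. The point $x_0=g_M(t_0)$ has tail $t_0=\NCF{a_{M+1},\ldots}$, which is irrational and strictly greater than $1$. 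Hence $t_0$ lies in the open interval $(1,\infty)$, and $x_0$ lies in the interior of $C_M$.

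The care needed here concerns the tail value $t=1$, corresponding to the tail $2,2,\ldots$, and the rational endpoints. This is exactly where the convention of using expansions eventually equal to $2$ matters. Under that convention, points in the interior of $C_M$ really do have the digits $a_1,\ldots,a_M$. Alternatively one can use the Hirzebruch formula~\eqref{HZRegEqt}: the Farey/regular continued-fraction cylinders around an irrational point are open intervals, and a regular continued-fraction prefix determines a negative continued-fraction prefix.

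\textbf{Expected obstacle.} The analytic part is immediate from Lemma~\ref{lem:lipschitz}. The main obstacle is the bookkeeping in Step 2: checking that monotonicity of the digit maps makes the cylinder an interval, and that irrationality of $x_0$ keeps it away from the rational boundary points. Combining the two steps, $\varkappa_N$ is constant on the interior of $C_{N-1}$, which is an open neighbourhood of $x_0$.
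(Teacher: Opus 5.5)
Your proposal is correct and follows essentially the same route as the paper: Lemma~\ref{lem:lipschitz} shows that the first $N-1$ tail digits determine $\varkappa_N$, and every point of the corresponding continued-fraction cylinder shares those digits. Your Step~2 makes explicit what the paper leaves implicit, namely that this cylinder is a neighbourhood of $x_0$: the monotone map $g_M$ sends the irrational tail $t_0\in(1,\infty)$ to an interior point of the cylinder.
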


\begin{proof}
For a fixed irrational $x$, choose enough initial negative continued-fraction
digits so that Lemma~\ref{lem:lipschitz} makes all later digits irrelevant to
the coefficient of degree~$N$.  All real numbers in the corresponding
continued-fraction cylinder have the same initial digits, hence the same
coefficient~$\varkappa_N$.
\end{proof}

Let us give the first examples explicitly.

By~\eqref{WxDef}, $\varkappa_{0}(x)\equiv1$  and $\varkappa_{1}(x)\equiv0$ 
on $(1,2)$.
The coefficient $\varkappa_{2}(x)$ equals~$0$ for $x<\frac{3}{2}$ and~$1$ otherwise.
Figures~\ref{fig:kappa-3}--\ref{fig:kappa-7} illustrate the first
non-trivial coefficient functions on the interval $(1,2)$.  The vertical
segments are only a plotting convention indicating jumps; the values at the
rational breakpoints are not distinguished in these figures.
The graphs of the coefficients $\varkappa_N(x)$ become increasingly intricate
as $N$ grows.

\begin{figure}[htbp]
\centering
\begin{tikzpicture}
  \begin{axis}[
    kappagraph,
    width=.72\textwidth,
    height=.30\textwidth,
    ymin=-1.35, ymax=1.35,
    ytick={-1,0,1},
    ylabel={$\varkappa_{3}(x)$}
  ]
    \addplot+[const plot] coordinates {
      (1,0)
      ({4/3},1)
      ({3/2},-1)
      ({5/3},0)
      (2,0)
    };
  \end{axis}
\end{tikzpicture}
\caption{The coefficient function $x\mapsto\varkappa_{3}(x)$ on $(1,2)$.}
\label{fig:kappa-3}
\end{figure}

\begin{figure}[htbp]
\centering
\begin{tikzpicture}
  \begin{axis}[
    kappagraph,
    width=.72\textwidth,
    height=.30\textwidth,
    ymin=-1.35, ymax=2.35,
    ytick={-1,0,1,2},
    ylabel={$\varkappa_{4}(x)$}
  ]
    \addplot+[const plot] coordinates {
      (1,0)
      ({5/4},1)
      ({4/3},-1)
      ({7/5},0)
      ({3/2},1)
      ({8/5},2)
      ({5/3},-1)
      ({7/4},0)
      (2,0)
    };
  \end{axis}
\end{tikzpicture}
\caption{The coefficient function $x\mapsto\varkappa_{4}(x)$ on $(1,2)$.}
\label{fig:kappa-4}
\end{figure}

\begin{figure}[htbp]
\centering
\begin{tikzpicture}
  \begin{axis}[
    kappagraph,
    ymin=-4.45, ymax=2.45,
    ytick={-4,-3,-2,-1,0,1,2},
    ylabel={$\varkappa_{5}(x)$}
  ]
    \addplot+[const plot] coordinates {
      (1,0)
      ({6/5},1)
      ({5/4},-1)
      ({9/7},0)
      ({11/8},1)
      ({7/5},-2)
      ({10/7},-1)
      ({11/7},0)
      ({8/5},-4)
      ({13/8},-3)
      ({5/3},1)
      ({12/7},2)
      ({7/4},-1)
      ({9/5},0)
      (2,0)
    };
  \end{axis}
\end{tikzpicture}
\caption{The coefficient function $x\mapsto\varkappa_{5}(x)$ on $(1,2)$.}
\label{fig:kappa-5}
\end{figure}

\begin{figure}[htbp]
\centering
\begin{tikzpicture}
  \begin{axis}[
    kappagraph,
    ymin=-3.45, ymax=8.45,
    ytick={-3,-2,-1,0,1,2,3,4,5,6,7,8},
    ylabel={$\varkappa_{6}(x)$}
  ]
    \addplot+[const plot] coordinates {
      (1,0)
      ({7/6},1)
      ({6/5},-1)
      ({11/9},0)
      ({14/11},1)
      ({9/7},-2)
      ({13/10},-1)
      ({4/3},1)
      ({15/11},2)
      ({11/8},-2)
      ({18/13},-1)
      ({7/5},1)
      ({17/12},2)
      ({10/7},-1)
      ({13/9},0)
      ({3/2},1)
      ({14/9},2)
      ({11/7},-2)
      ({19/12},-1)
      ({8/5},7)
      ({21/13},8)
      ({13/8},3)
      ({18/11},4)
      ({5/3},0)
      ({17/10},1)
      ({12/7},-3)
      ({19/11},-2)
      ({7/4},1)
      ({16/9},2)
      ({9/5},-1)
      ({11/6},0)
      (2,0)
    };
  \end{axis}
\end{tikzpicture}
\caption{The coefficient function $x\mapsto\varkappa_{6}(x)$ on $(1,2)$.}
\label{fig:kappa-6}
\end{figure}

\begin{figure}[htbp]
\centering
\begin{tikzpicture}
  \begin{axis}[
    kappagraph,
    ymin=-18, ymax=6,
    ytick={-15,-10,-5,0,5},
    ylabel={$\varkappa_{7}(x)$}
  ]
    \addplot+[const plot] coordinates {
      (1,0)
      ({8/7},1)
      ({7/6},-1)
      ({13/11},0)
      ({17/14},1)
      ({11/9},-2)
      ({16/13},-1)
      ({5/4},0)
      ({19/15},1)
      ({14/11},-3)
      ({23/18},-2)
      ({9/7},0)
      ({22/17},1)
      ({13/10},-2)
      ({17/13},-1)
      ({19/14},0)
      ({15/11},-4)
      ({26/19},-3)
      ({11/8},3)
      ({29/21},4)
      ({18/13},-1)
      ({25/18},0)
      ({7/5},3)
      ({24/17},4)
      ({17/12},0)
      ({27/19},1)
      ({10/7},2)
      ({23/16},3)
      ({13/9},0)
      ({16/11},1)
      ({3/2},-1)
      ({17/11},0)
      ({14/9},-4)
      ({25/16},-3)
      ({11/7},4)
      ({30/19},5)
      ({19/12},0)
      ({27/17},1)
      ({8/5},-12)
      ({29/18},-11)
      ({21/13},-17)
      ({34/21},-16)
      ({13/8},-3)
      ({31/19},-2)
      ({18/11},-7)
      ({23/14},-6)
      ({5/3},-1)
      ({22/13},0)
      ({17/10},-4)
      ({29/17},-3)
      ({12/7},3)
      ({31/18},4)
      ({19/11},-1)
      ({26/15},0)
      ({23/13},1)
      ({16/9},-3)
      ({25/14},-2)
      ({9/5},1)
      ({20/11},2)
      ({11/6},-1)
      ({13/7},0)
      (2,0)
    };
  \end{axis}
\end{tikzpicture}
\caption{The coefficient function $x\mapsto\varkappa_{7}(x)$ on $(1,2)$.}
\label{fig:kappa-7}
\end{figure}

Lemma~\ref{lem:lipschitz} shows that only finitely many initial continued-fraction 
patterns of~$x$ affect~$\varkappa_N(x)$, and it is also clear that
sufficiently large continued-fraction digits are indistinguishable modulo a fixed power of $q$.
Furthermore, sufficiently large values of each relevant digit give the same series modulo \(q^{N+1}\).
It follows from~\eqref{eq:Ra} that if $a\ge N+3$, then modulo $q^{N+1}$ 
the terms $q^{a-1}W$ and $q^aW$ vanish, 
while both $[a-2]_q$ and $[a-1]_q$ are congruent to $1+q+\cdots+q^N$. 
Hence $R_a(W)\equiv1\pmod{q^{N+1}}$, independently of both $a$ and $W$. 
Together with the finite bound on the number of relevant initial digits, 
this leaves only finitely many truncated digit patterns and therefore finitely many rational endpoints.
Hence only finitely many rational breakpoints occur.
Proposition~\ref{LoConetProp} thus implies that
every coefficient $\varkappa_N$ is a step function on $(1,2)$.

We also have the following simple property.
 
\begin{proposition}
\label{RContProp}
Every coefficient function $\varkappa_N(x)$
is right-continuous at its rational breakpoints. 
\end{proposition}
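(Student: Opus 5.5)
The plan is to describe explicitly the negative continued fractions of the points lying immediately to the right of a rational breakpoint $r\in(1,2)$. We then show that, up to any prescribed length, they share their initial digits with the standard infinite expansion of $r$, and conclude with Lemma~\ref{lem:lipschitz}.

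\emph{Step 1: the value at $r$.} Write the finite expansion $r=\NCF{c_1,c_2,\ldots,c_k}$ with $c_1=2$ and $c_i\ge2$. The standard infinite expansion used to define $W_r$ is $\NCF{c_1,\ldots,c_{k-1},c_k+1,2,2,2,\ldots}$. First I would check that this infinite expansion yields the same series $[r]_q$ as formula~\eqref{qc}. The tail $\NCF{2,2,2,\ldots}$ has $W=0$, so by~\eqref{Conv} its $q$-value is $1=[1]_q$. Moreover $[c_k+1]_q-q^{c_k}/1=[c_k]_q$, so the last two levels of~\eqref{qcInf} collapse to the last level of~\eqref{qc}. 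Hence $\varkappa_N(r)$ is the coefficient computed from the standard expansion.

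\emph{Step 2: the right neighbours of $r$.} Each map $y\mapsto c-1/y$ is increasing on $y>1$, so $y\mapsto\NCF{c_1,\ldots,c_{k-1},y}$ is increasing, and it equals $r$ at $y=c_k$. Every $x>r$ sufficiently close to $r$ therefore satisfies $x=\NCF{c_1,\ldots,c_{k-1},c_k+1-1/z}$ with $z>1$, and $z\to1^+$ as $x\to r^+$. (Points with $y\in(c_k,c_k+1)$ correspond to $z=1/(c_k+1-y)>1$.) Since $\NCF{\underbrace{2,\ldots,2}_{n}}=(n+1)/n$, the condition $z$ close to $1$ forces the expansion of $z$ to begin with a long string of $2$'s. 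Concretely, $z<(n+1)/n$ implies that the first $n$ digits of $z$ are all equal to $2$. Consequently, for every $M$ there is $\delta>0$ such that each $x\in(r,r+\delta)$ has expansion
\[
\NCF{c_1,\ldots,c_{k-1},c_k+1,\underbrace{2,\ldots,2}_{M},\ast,\ldots},
\]
which agrees with the standard expansion of $r$ in its first $k+M$ digits.

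\emph{Step 3: conclusion.} Fix $N$. By Lemma~\ref{lem:lipschitz}, each common leading digit raises $\ord$ of the difference of the tail series by at least one. Hence if $M\ge N$, all $x\in(r,r+\delta)$ have $W_x\equiv W_r\pmod{q^{N-1}}$. Via $[x]_q=1+q^2W_x$ this gives $\varkappa_N(x)=\varkappa_N(r)$ on $(r,r+\delta)$, which is right-continuity.

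The main obstacle is Step 2: one must correctly identify the side from which the standard expansion approximates $r$. Points approaching $r$ from the left have expansions $\NCF{c_1,\ldots,c_k,y}$ with $y$ large, whose digits differ from the standard expansion already at position $k+1$. So the argument must use monotonicity carefully to show that it is the right-hand neighbours which acquire the long tail of $2$'s.
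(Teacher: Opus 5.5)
Your proposal is correct and is essentially the paper's proof: you pass to the standard expansion of $r$ (last digit increased by $1$, then $2,2,2,\ldots$), use the monotonicity of $t\mapsto a-1/t$ to see that points just to the right of $r$ share an arbitrarily long prefix with it, and conclude with Lemma~\ref{lem:lipschitz}, your Steps~1 and~2 spelling out what the paper compresses into one line. One harmless slip in your closing remark: the left neighbours $\NCF{c_1,\ldots,c_k,y}$ already differ from the standard expansion at position $k$ (digit $c_k$ versus $c_k+1$), not first at position $k+1$.
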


\begin{proof}
The statement means that for every rational $r\in(1,2)$ and every fixed $N$, 
there exists $\varepsilon>0$ such that $\varkappa_N(x)=\varkappa_N(r)$
for $r\leq x<r+\varepsilon$.
This follows directly from~\eqref{qcInf}.
More explicitly, one can use the infinite negative expansion 
obtained by increasing the last digit and appending \(2,2,\ldots\); 
cylinders sharing a long prefix of this expansion lie on the right of the rational 
(the maps \(t\mapsto a-1/t\) are increasing).  
Lemma~\ref{lem:lipschitz} then gives the claimed right-continuity.
\end{proof}

%%%%%%%%%%%%%%%%%%
\subsection{Fibonacci maximizers}
%%%%%%%%%%%%%%%%%%

Recall that the classical Fibonacci sequence $0,1,1,2,3,5,8,13,\ldots$
is determined by $F_0=0,F_1=1$, and $F_{n+1}=F_n+F_{n-1}$.

It turns out, quite remarkably, that the maximal absolute values of the functions $\varkappa_N(x)$
are attained at the quotients of two consecutive Fibonacci numbers.
This relation of $q$-deformed real numbers to the Fibonacci sequence is quite interesting.
The next statement is closely related to Theorem~\ref{DomThm} and can be
viewed as an expected corollary of it.
It shows that the Fibonacci convergents realize the sharp coefficientwise bound degree by degree.
\begin{corollary}
\label{FiboThm}
For every $N\ge2$, put \(m=\lfloor N/2\rfloor\).
Then
$$
 \max_{1<x<2}|\varkappa_N(x)|
 =|\varkappa_N(\varphi)|
 =|\varkappa_N(r_m)|,
$$
where 
$$
 r_m=
 \frac{F_{2m+2}}{F_{2m+1}}=
 \begin{cases}
 \dfrac{F_{N+2}}{F_{N+1}},&N\text{ even},\\[8pt]
 \dfrac{F_{N+1}}{F_N},&N\text{ odd}.
 \end{cases}
$$
\end{corollary}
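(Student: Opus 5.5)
The first equality is immediate from Theorem~\ref{DomThm}: for $x\in(1,2)$ we have $|\varkappa_N(x)|\le|\varkappa_N(\varphi)|$, and $\varphi\in(1,2)$, so the supremum is attained at $\varphi$. It remains to show $\varkappa_N(r_m)=\varkappa_N(\varphi)$, where $m=\lfloor N/2\rfloor$. I would use the stabilization mechanism of Lemma~\ref{lem:lipschitz} and compare the negative continued fractions of $r_m$ and $\varphi$.

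By the Example in Section~\ref{DefSec}, $r_m=F_{2m+2}/F_{2m+1}=\NCF{2,3^{(m-1)},2}$. Following the convention of Section~\ref{DefSec} for rationals, its standard infinite expansion is obtained by raising the last digit and appending $2$'s, giving $\NCF{2,3^{(m-1)},3,2,2,\ldots}=\NCF{2,3^{(m)},2,2,\ldots}$. On the other hand $\varphi=\NCF{2,3,3,\ldots}$. Thus the tails $\tilde r_m$ and $\tilde\varphi$ share the first $m$ digits, all equal to $3$. The tail of $r_m$ after these digits is $(2,2,\ldots)$, with $W=0$, and the tail of $\varphi$ is again $\varphi$, with $W=W_\varphi$.

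Next I apply \eqref{DiffEq} $m$ times with $a=3$. Each application raises $\ord$ of the difference by exactly $a-1=2$, so
\[
\ord\bigl(W_{r_m}-W_\varphi\bigr)=2m+\ord(0-W_\varphi)=2m,
\]
since $W_\varphi$ has constant term $1$. Hence $[q^n]W_{r_m}=[q^n]W_\varphi$ for $n\le 2m-1$. Because $[x]_q=1+q^2W_x$, this gives $\varkappa_N(r_m)=\varkappa_N(\varphi)$ for all $N\le 2m+1$. For the given $N$ we have $N\le 2\lfloor N/2\rfloor+1=2m+1$, so the claim follows. The parity split into $F_{N+2}/F_{N+1}$ and $F_{N+1}/F_N$ is just $2m+2=N+2$ or $N+1$.

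The main point to check carefully is the bookkeeping of the rational expansion. One must confirm that $\NCF{2,3^{(m-1)},2}$ is correct for $F_{2m+2}/F_{2m+1}$, for instance $8/5=\NCF{2,3,2}$, and that the "increase last digit" convention matches the value $[r_m]_q$ used in the statement; Proposition~\ref{RContProp} confirms it is the right-continuous value. The case $m=1$, i.e.\ $N=2,3$ with $r_1=3/2=\NCF{2,2}\to\NCF{2,3,2,\ldots}$, is checked the same way.
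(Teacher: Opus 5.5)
Your proposal is correct and follows the paper's proof essentially step for step: the bound comes from Theorem~\ref{DomThm}, you write $r_m=\NCF{2,3,\ldots,3,2,2,\ldots}$ with $m$ threes so that $W_{r_m}=R_3^m(0)$, and the exact order increase by $2$ in \eqref{DiffEq} for $a=3$ gives $\ord(W_\varphi-W_{r_m})=2m$, hence $\varkappa_j(r_m)=\varkappa_j(\varphi)$ for $j\le 2m+1$. The convention check you flag is settled directly by $[\NCF{2,2,2,\ldots}]_q=1$, which gives $[a_k+1]_q-q^{a_k}=[a_k]_q$, so you do not need to invoke Proposition~\ref{RContProp}.
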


\begin{proof}
Theorem~\ref{DomThm} gives
$$
 |\varkappa_N(x)|\le |\varkappa_N(\varphi)|,
 \qquad 1<x<2.
$$
It therefore suffices to show that the rational number $r_m$ has the same
$N$th coefficient as $\varphi$.

In negative continued-fraction notation,
$$
 r_m=
 \NCF{2,\underbrace{3,\ldots,3}_{m\text{ times}},2,2,2,\ldots}
 =\frac{F_{2m+2}}{F_{2m+1}},
$$
whereas
$$
 \varphi=\NCF{2,3,3,3,\ldots}.
$$
Recall that $W_x$ is defined by~\eqref{WxDef}.
If, as before, $R_3$ denotes the operation of prepending the digit $3$, then 
$$
 W_{r_m}=R_3^m(0),
 \qquad
 W_\varphi=R_3^m(W_\varphi).
$$
Moreover,
\[
 R_3(U)-R_3(V)
 =\frac{q^2(U-V)}
 {(1+q+q^3U)(1+q+q^3V)}.
\]
Since both denominator factors are $q$-adic units,
\[
 \operatorname{ord}_q\bigl(R_3(U)-R_3(V)\bigr)
 =\operatorname{ord}_q(U-V)+2.
\]
As $W_\varphi$ has nonzero constant term, iteration gives
\[
 \operatorname{ord}_q(W_\varphi-W_{r_m})=2m.
\]
Hence
\[
 [\varphi]_q-[r_m]_q
 =q^2(W_\varphi-W_{r_m})
 \in q^{2m+2}\mathbb Z[[q]].
\]
Therefore
\[
 \varkappa_j(r_m)=\varkappa_j(\varphi)
 \qquad (j\le 2m+1).
\]
Since $N\le 2m+1$, the equality holds for $j=N$, and the result follows.
\end{proof}

The maximal absolute values of $ \varkappa_N(x)$ given by Corollary~\ref{FiboThm}
are precisely the values of the series~$D$:
\[
 \max_{1<x<2}|\varkappa_N(x)|=[q^{N-2}]D(q),\qquad N\ge2.
\]
Their explicit expressions can be found in~\cite{Pedon}.

%%%%%%%%%%%%%%%%%%%%%%
\section*{Appendix: some statistical results}
%%%%%%%%%%%%%%%%%%%%%%

It would be interesting to study the radius of convergence of $[x]_q$ 
for a ``typical" real number $x\in (1,2)$; for instance, we know nothing at all about the 
measure of the set of $x$ for which $[x]_q$ converges for $|q|<1$.  
In the hope to gain some intuition for this question, we computed, 
for each of the $76{,}115$ reduced rationals $x=p/q\in(1,2)$ with
$2\le q\le500$, the minimal modulus of a pole of the rational
function $[x]_q$ (i.e., the radius of convergence of its Taylor series). 
The results are depicted in the histogram in Figure 13. 

\begin{figure}[htbp]\label{f13}
\centering
\includegraphics[width=.45\textwidth]{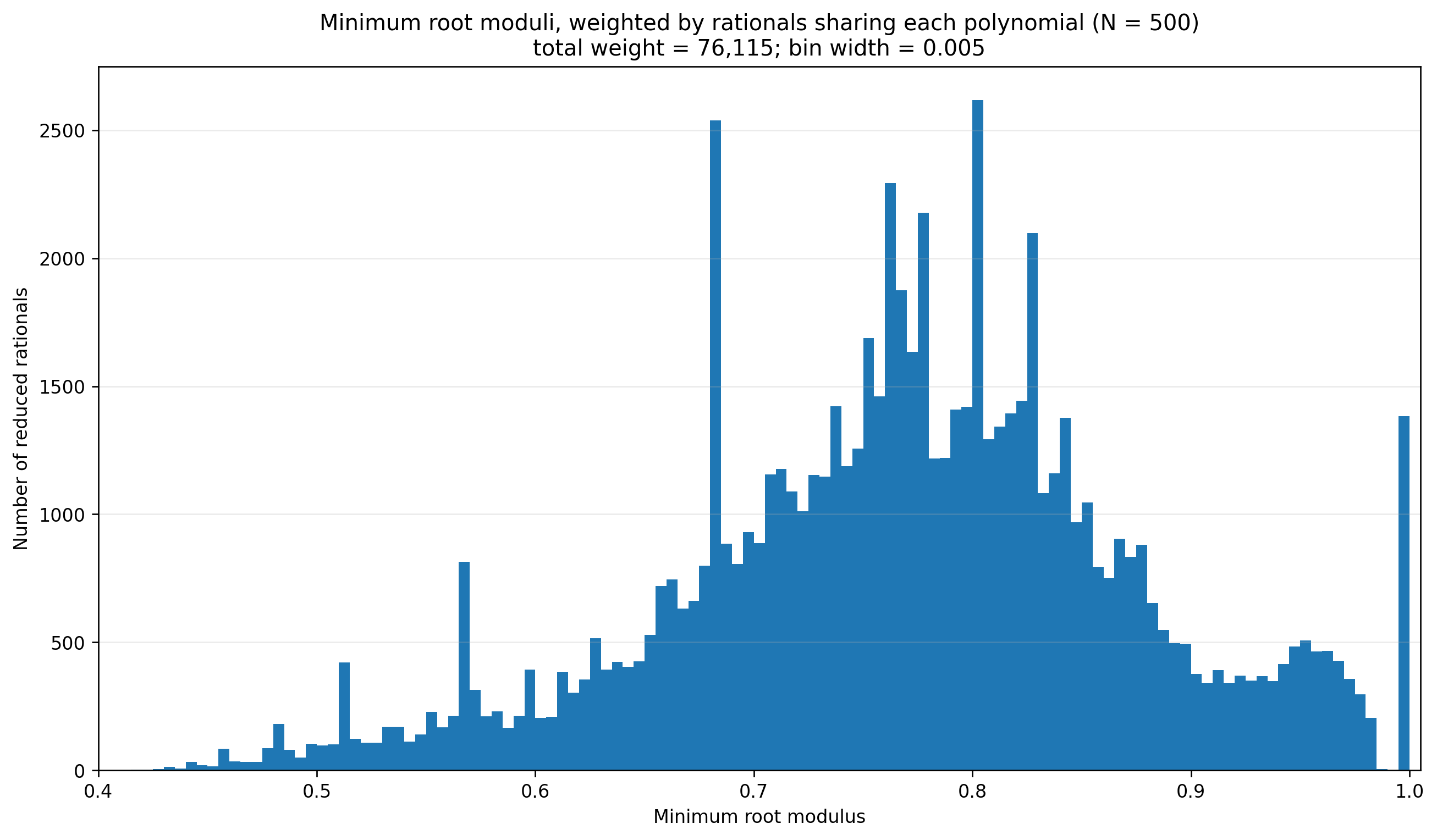}
\caption{\scriptsize Empirical distribution of the minimal pole modulus for reduced rationals
$x\in(1,2)$ of denominator at most $500$, with each rational counted once.}
\label{fig:radii-histogram}
\end{figure}

Each rational is counted once; hence a denominator polynomial
arising from several rationals is counted with that multiplicity.  The resulting
mean and median are both approximately $0.77$.

For comparison, the exact formulas for $[{\rm cotan}(1)]_q$ and $[e]_q$ given
in Section~3 of~\cite{Etingof-qreal} yield the approximate radii
$R_{{\rm cotan}(1)}\approx0.5912689$ and $R_e\approx0.7157974$.  For the more complicated
case of $[\pi]_q$ considered in Example~\ref{PiExample}(b), where noclosed formula is known or expected (even for the usual continuous fraction expansion), numerical estimates based on coefficients through degree
$10^4$ suggest the conjectural value $R_\pi\approx0.57$.

{\bf Acknowledgements.}
Part of this work was carried out during the conference
{\it Representation theory, geometry, and mathematical physics},
held in honor of the 90th birthday of A. A. Kirillov.
We are grateful to the
Simons Center for Geometry and Physics
and to Stony Brook University for hospitality.
We are pleased to thank Sergei Fomin, Sophie Morier-Genoud, Emmanuel Pedon, and Alexander Veselov 
for enlightening discussions.
We used ChatGPT during the development and drafting of the paper 
to explore possible formulations and proof strategies. 
All statements and proofs were independently checked 
and are the sole responsibility of the authors.
This work was partially supported by the NSF grant DMS-2001318.

\end{document}